\newtheorem{theorem}{Theorem}
\newtheorem{lemma}[theorem]{Lemma}
\newcommand{\R}{\mathbb{R}}
\newcommand{\Z}{\mathbb{Z}}
\newcommand{\C}{\mathbb{C}}
\newcommand{\N}{\mathbb{N}}
\begin{document}
\title{Numerical approximation of the scattering amplitude in elasticity}
\author{ \textsc{Juan A. Barceló$\,^\ast$}\and \textsc{Carlos Castro}\thanks{M2ASAI Universidad Polit\'ecnica de Madrid, Departamento de Matem\'atica e Inform\'atica, ETSI Caminos, Canales y Puertos, 28040 Madrid, Spain. E-mails: {\tt juanantonio.barcelo@upm.es,} {\tt carlos.castro@upm.es.} }
} 
 \date{}

\maketitle

\abstract{We propose a numerical method to approximate the scattering amplitudes for the elasticity system with a non-constant matrix potential in dimensions $d=2$ and $3$. This requires to approximate first the scattering field, for some incident waves, which can be written as the solution of a suitable Lippmann-Schwinger equation. In this work we adapt the method introduced by G. Vainikko in \cite{V} to solve such equations when considering the Lamé operator.  Convergence is proved for sufficiently smooth potentials. Implementation details and numerical examples are also given.}

\bigskip

{\bf Keywords:} 
Scattering, Elasticity, Numerical approximation

\section{Introduction}

The scattering of time-harmonic elastic waves in $\mathbb{R}^d$ ($d=2,3$)  is described through the solutions of the Lamé equation
\begin{equation} 
\Delta^{\ast}\mathbf{u}(x)+\omega^{2}\mathbf{u}(x) = Q(x)\mathbf{u}%
(x), \qquad\qquad\omega>0,\ x\in\mathbb{R}^{d}, \label{ecuacionV}%
\end{equation}
where $\mathbf{u}: \mathbb{R}^{d} \to \mathbb{R}^{d} $ is the displacement vector,  $\omega >0$ is the frequency and
\begin{equation}
\label{operador}\Delta^{\ast}\mathbf{u}(x) = \mu\Delta\mathrm{I}%
\mathbf{u}(x)+(\lambda+\mu)\nabla div\,\mathbf{u}(x),
\end{equation}
with $\Delta\mathrm{I}$ denoting the $d\times d$ diagonal matrix with the Laplace operator on the diagonal. The constants $\lambda$ and $\mu$ are known as the Lam\'{e} parameters and depend on the underlying elastic properties. Throughout this paper we will assume that $\mu>0$ and $2\mu+\lambda>0$ so that the operator $\Delta^{\ast}$ is strongly elliptic.
The square matrix $Q$ represents the action of some live loads on a bounded region. This can also represent inhomogeneities in the density of the elastic material $\rho(x)$. In this case, $Q$ takes the particular form $Q=\omega^2 (1-\rho(x))I$. Here we assume that each component of $Q$, $q_{\alpha \beta}(x)$, $\alpha,\beta =1,...,d$, is real, compactly supported and belongs to $L^{r}(\mathbb{R}^{d})$ for some $r>d/2$. 

When $Q=0$ a solution of (\ref{ecuacionV}) is expressed as the sum of its 
 compressional part $\mathbf{u}^{p}$  and the shear part $\mathbf{u}^{s}$ 
\begin{equation}
\label{hdeco}\mathbf{u}=\mathbf{u}^{p}+\mathbf{u}^{s},
\end{equation}
where
\begin{equation}
\mathbf{u}^{p}=-\frac{1}{k_{p}^{2}}\nabla div\,\mathbf{u}\quad\text{and}
\quad\mathbf{u}^{s}=\mathbf{u}-\mathbf{u}^{p} \label{upus}%
\end{equation}
with
\begin{equation*}
\displaystyle k_{p}^{2}=\frac{\omega^{2}}{(2\mu+\lambda)}
\qquad\mathtt{\mathrm{and}} \qquad k_{s}^{2}=\frac{\omega^{2}}{\mu}.
\end{equation*}
They are solutions of the \emph{vectorial homogeneous Helmholtz equations}
$$
\Delta\mathrm{I}\mathbf{u}^{p}(x)+k_{p}^{2}\mathbf{u}^{p}(x)=\mathbf{0}, \qquad \mbox{and} \qquad 
\Delta\mathrm{I}\mathbf{u}^{s}(x)+k_{s}^{2}\mathbf{u}^{s}(x)=\mathbf{0},
$$ 
respectively. The values $k_{p}$ and $k_{s}$ are known as the associated speed of propagation of longitudinal waves
(\textit{p-waves}) and transverse waves (\textit{s-waves}) respectively. Note that compressional waves satisfy $\nabla \times \mathbf{u}^{p}=0$ while for the transverse ones we have $div \;  \mathbf{u}^{s}=0$.

When $Q\neq 0$ solutions $\mathbf{u}$ of \eqref{ecuacionV} can be interpreted as perturbations of the homogeneous ones, i.e. we write
\begin{equation}
\mathbf{u}(x)=\mathbf{u}_{i}(x)+\mathbf{v}(x), \label{uuiv}%
\end{equation}
where $\mathbf{u}_{i}$ is the incident wave, a solution of the homogeneous
Lamé equation (i.e. with $Q=0$), and $\mathbf{v}$ the
\emph{scattered solution}. 

Unicity of scattered solutions $\mathbf{v}$ is obtained from radiation conditions when $|x|\to \infty$. A natural choice is to assume that there are no reflections coming from infinity. As $Q$ is compactly supported this condition coincides with the analogous imposed to solutions of $\Delta^{\ast}\mathbf{v}(x)+\omega^{2}%
\mathbf{v}(x)=\mathbf{0}$ in an exterior domain. These are known as the outgoing
Kupradze radiation conditions and they are equivalent to make both $\mathbf{v}^{p}$ and $\mathbf{v}^{s}$ (according to the decomposition (\ref{hdeco})-(\ref{upus})) satisfy
the corresponding outgoing Sommerfeld radiation conditions for the Hemholtz equation, that is,
\begin{align}
(\partial_{r}-ik_{p})\mathbf{v}^{p}  &  =\mathbf{o}(r^{-(d-1)/2}),\qquad
r=|x|\rightarrow\infty,\label{radiacionup}\\
(\partial_{r}-ik_{s})\mathbf{v}^{s}  &  =\mathbf{o}(r^{-(d-1)/2}),\qquad
r=|x|\rightarrow\infty. \label{radiacionus}%
\end{align}

It is known that, with certain conditions in $Q$, the system \eqref{ecuacionV},\eqref{uuiv}
together with the outgoing Kupradze radiation conditions \eqref{radiacionup}-\eqref{radiacionus} has a unique solution $\mathbf{u}$  (see Theorem 2.1 and Proposition 3.1 of \cite{BFPRV2} and chapter 5 in \cite{H}).

Problem \eqref{ecuacionV}, \eqref{radiacionup}-\eqref{radiacionus} and \eqref{uuiv} is equivalent to the integral equation
\begin{equation} \label{eq_LS}
\mathbf{u}=\mathbf{u}_i + \int_{\mathbb{R}^d} \Phi (x-y) Q(y)\mathbf{u}(y) \; dy, 
\end{equation}
where $\Phi(x)$ is the fundamental tensor of the Lamé operator $\Delta^*+\omega^2 I$. This tensor is well-known for dimension $d=2,3$ (see \cite{A} and \cite{K}). We give its expression in section \ref{coeficientes} below.   

The implicit integral equation for $\mathbf{u}$ in \eqref{eq_LS} is known as a Lippmann-Schwinger equation. A trigonometrical collocation method was proposed in \cite{V} for the numerical approximation of this equation, when considering the scalar case and the fundamental solution of the Hemholtz equation, (we used this method in \cite{BCR} to give approximations of the potential in the inverse quantum scattering problem). A similar collocation method, but without trigonometric polynomials, was proposed in \cite{LMS}. In this paper we adapt the method in \cite{V} to approximate the solutions of \eqref{eq_LS}. Apart of the fact that we are dealing with a more complex vector problem, there are two main difficulties arising. The first one comes from the asymptotic bound for the Fourier coefficients of a suitable truncation of the Green tensor field associated to the Lamé operator (see Lemma \ref{le_coef} below). This is required to prove the convergence of the method. The second one comes from the fact that we slightly change the point of view in \cite{V} where the numerical method gives an approximation of the product $Q \mathbf{u}$, and therefore $\mathbf{u}$ can  be computed only in the support of $Q$. As described in \cite{V}, the solution $\mathbf{u}$ can be extended to $\mathbb{R}^d$ but the estimate of the numerical approximation blows up in the proximity of the boundary of the support of $Q$. In our approach we solve the problem directly on $\mathbf{u}$ and therefore we obtain a global approximation in $\mathbb{R}^d$. 

As an application we approximate scattering amplitudes. Let us briefly describe what these objects are and their interest. As incident waves we usually consider plane waves either transverse (plane \textit{s-waves})
\begin{equation} 
\label{uis}\mathbf{u}^{s}_{i}(\omega, \theta, \varphi,x)=e^{ik_{s}\theta\cdot x}\varphi,
\end{equation}
with polarization vector $\varphi\in\mathbb{S}^{d-1}$ orthogonal to the wave
direction $\theta\in\mathbb{S}^{d-1}$, or longitudinal plane waves (plane
\textit{p-waves})
\begin{equation}
\label{uip}\mathbf{u}^{p}_{i}(\omega,\theta,x)=e^{ik_{p}\theta\cdot x}\theta.
\end{equation}
Note that transversal waves depend on an extra angle parameter $\varphi$, not determined by $\theta$, only for $d=3$. From now on, we omit the dependence on $\varphi$ when considering the specific case $d=2$.
 
If $\mathbf{u}_p$ is the solution of (\ref{ecuacionV}) with $\mathbf{u}_p=\mathbf{u}_i^p+\mathbf{v}_p$,  $\mathbf{v}_p$ the scattered solution satisfying the outgoing Kupradze radiation condition, then $\mathbf{v}_p=\mathbf{v}^p_p+\mathbf{v}^s_p$ and we have  the asymptotic as $|x|\rightarrow\infty$ (see section 2 of \cite{BFPRV2} or \cite{BFPRV1}),
\begin{align}
\mathbf{v}^{p}_p(x)  &  =  c\ k_{p}^{\frac{d-3}{2}}\,\frac{e^{ik_{p}|x|}}%
{|x|^{(d-1)/2}}\,\mathbf{v}_{p,\infty}^p\left(\omega,\theta, x/|x|\right)  +
\mathbf{o}\left(  |x|^{-(d-1)/2}\right)  , \label{pp_asymptotic}\\
\mathbf{v}^{s}_p(x)  &  = c\ k_{s}^{\frac{d-3}{2}}\,\frac{e^{ik_{s}|x|}}%
{|x|^{(d-1)/2}}\,\mathbf{v}_{p,\infty}^s\left(\omega,\theta,  x/|x|\right)  +
\mathbf{o}\left(  |x|^{-(d-1)/2}\right)  , \label{ps_asymptotic}
\end{align}
where $\mathbf{v}_{p,\infty}^p$ and $\mathbf{v}_{p,\infty}^s$ are known as the
longitudinal  and transverse  scattering amplitudes of $\mathbf{u}_p$
respectively. Note that the last argument in $\mathbf{v}_{p,\infty}^p\left(\omega,\theta, x/|x|\right)$ corresponds to a direction in $\mathbb{S}^{d-1}$. This is interpreted as the direction in which the scattering amplitude is observed. 

In a similar way, if
 $\mathbf{u}_s$ is the solution of (\ref{ecuacionV}) with $\mathbf{u}_s=\mathbf{u}_i^s+\mathbf{v}_s$,   $\mathbf{v}_s$ the scattered solution satisfying the outgoing Kupradze radiation condition  then $\mathbf{v}_s=\mathbf{v}^p_s+\mathbf{v}^s_s$ and we obtain the similar asymptotic  to (\ref{pp_asymptotic}) and (\ref{ps_asymptotic})  and the
longitudinal  and transverse  scattering amplitudes of $\mathbf{u}_s$, 
\begin{equation} \label{ss_asymptotic}
\mathbf{v}_{s,\infty}^p(\omega,\theta,\varphi, x/|x|), \qquad \mathbf{v}_{s,\infty}^s(\omega,\theta,\varphi,x/|x|).
\end{equation}

Scattering amplitudes can be easily measured in practice with seismographs situated far away from the support of the potential $Q$. Thus, a natural question is if we can derive information of the elastic material from these scattering amplitudes, \cite{H1}, \cite{H2}, \cite{BFPRV2}, \cite{BFPRV1} and \cite{BCLV} for numerical results. This is known as the inverse scattering problem in elasticity, (analogous problems arise in quantum scattering, acoustic or electromagnetic obstacle scattering, \cite{CK}). In our case, the inverse problem would be to obtain information about the matrix $Q$. 

To understand the problem let us mention how much information we can derive from these data. We can define $2$ scattering amplitudes coming from longitudinal incident waves $\mathbf{u}^{p}_{i}$. They depend on the variables $\left(\omega,\theta,  x/|x|\right)\in \mathbb{R}^+\times \mathbb{S}^{d-1} \times \mathbb{S}^{d-1} $. This gives us $2d-1$ free parameters. As they are vector waves, each scattering amplitude provides $d(2d-1)$ parameters. Thus, we have $2d(2d-1)$ parameters that we can obtain from the longitudinal incident waves. Concerning transverse incident waves $\mathbf{u}^{s}_{i}$ we can define
$2(d-1)$ different scattering amplitudes. Each one depends now on the variables $\left(\omega,\theta,\varphi,  x/|x|\right)\in \mathbb{R}^+\times \mathbb{S}^{d-1} \times \mathbb{S}^{d-2} \times \mathbb{S}^{d-1} $. We have now $3d-3$ parameters for each vector wave, and therefore $d(3d-3)$ total parameters coming from each scattering amplitude. Thus, we have  $2(d-1)d(3d-3)$ values that we can obtain from the transverse incident waves. Considering both longitudinal and transverse incident waves we sum up $2d(2d-1)+2(d-1)d(3d-3)$ scattering amplitudes that we can measure theoretically. In the particular case $d=2$ we obtain $24$ different scattering amplitudes, and $102$ when $d=3$.  

Now, if we want to determine $Q$ from these scattering amplitudes we have to compute $d^2$ functions depending on $d$ coordinates. In dimension $d=2$ this requires information depending on $8$ variables, while for $d=3$,  $27$ variables are involved. We see that, in principle the inverse problem consisting in recovering the matrix $Q$ from scattering data is overdetermined. Of course, some of these scattering amplitudes may not be independent and, in practice,  we cannot measure all these parameters accurately but it is reasonable to think that we can combine part of this information to recover $Q$.  Then, it is natural to restrict the scattering data. In the linear elasticity problem, it is most usual to deal with either fixed-angle or backscattering data. In the first case we fix the direction of the incident waves $\theta$ (which is no longer a free parameter) and in the latter, only scattering amplitudes with the last entry fixed as  $x/|x|=-\theta$ are considered.  Whether the scattering amplitudes allow to recover the potential $Q$ in these cases is widely open. There are some results only for some particular situations as for instance if $Q(x)=q(x)I$, or  when $\lambda+\mu=0$, where the Lamé operator can be reduced to a system of two independent vector Hemholtz equations, \cite{BFPRV2}. 

When reconstruction is not known, or difficult to obtain, it is sometimes possible to define approximations of $Q$ from scattering amplitudes (see \cite{BFPRV1} for backscattering data and \cite{BFPRV2} for fixed angle data). Let us give an example in dimension $d=2$. Note that scattering amplitudes for transversal incident waves will not depend on the angle $\varphi$ in this case and we omit this variable in the following. We define the Fourier transform of the $2\times 2$ matrix $Q_b$ (that we denote $\widehat{Q_b}$)  from scattering amplitudes as follows. For  $\xi \in \mathbb{R}^2$ with 
$\xi=-2 \omega \theta$, $\omega >0$ and $\theta \in \mathbb{S}^1$, 
$$\widehat{Q_b}(\xi)e_i=(\theta \cdot e_i) \mathbf{v_{p,\infty}}( \omega , \theta,-\theta)+ (\theta^\perp \cdot e_i) \mathbf{v_{s,\infty}}( \omega , \theta,-\theta) , \hspace{0.3cm} i=1,2,$$
where $ \mathbf{v_{p,\infty}}$ (resp. $ \mathbf{v_{s,\infty}}$) is a linear combination of $ \mathbf{v^p_{p,\infty}}(\omega_{pp}, \theta,-\theta)$ and $ \mathbf{v^s_{p,\infty}}(\omega_{ps},\theta, -\theta)$,  for certain energies $\omega_{pp}$ and $\omega_{ps}$ (resp. $ \mathbf{v^p_{s,\infty}}(\omega_{pp}, -\theta)$ and $ \mathbf{v^s_{s,\infty}}(\omega_{ps}, -\theta)$),  and $\{e_1,e_2\}$ is the canonical basis in $\mathbb{R}^2$. 
The matrix $Q_b$ is known as the Born approximation of $Q$ for backscattering data. There are not estimates on how good is the Born approximation in general. Most of the results involve asymptotic estimates for their Fourier tranforms, that allows to deduce that the Born approximation $Q_b$ and $Q$ share the same singularities. In other words, we can use the Born approximation to recover the singularities of $Q$, \cite{BFPRV1}. 

It is worth mentioning that here we are restricting ourselves to the recovering of $Q$ from scattering data. However, similar problems can be stated for more general situations as to recover the Lam\'e coefficients or other quantities depending on the elastic properties, 	\cite{H1}. This is a field where there are few results.  

As we mentioned before, we apply the numerical approximation of the Lippmann-Scwinger equation \eqref{eq_LS} to simulate scattering amplitudes. In fact, as we show below (section 6) the scattering amplitudes can be recovered from the potential and the scattering solutions of \eqref{eq_LS} for the different incident waves. This allows to construct synthetic scattering amplitudes from numerical approximation of \eqref{eq_LS}. In particular, we can test reconstruction algorithms and compute Born approximations that can be compared with the potential $Q$. This will be done in a forthcoming publication. 

The rest of the paper is divided as follows: In section 2 we reduce the Lippmann-Schwinger equation \eqref{eq_LS} to a multiperiodic problem by localizing and periodic extension. In section 3 we introduce the finite trigonometric space used in the discretization. In section 4 we derive the trigonometric collocation method. Convergence is proved in section 5. In section 6 we show how to approximate the scattering amplitudes. In section 7 we estimate the Fourier coefficientes of the Green function involved in the multiperiodic version of \eqref{eq_LS}. Finally, in section 8 we include some numerical examples that illustrate the convergence of the method and the approximation of some scattering amplitudes.


\section{Reduction to a multiperiodic problem}

We first write the Lippmann-Schwinger equation (\ref{eq_LS}) in terms of the scattered solution $\textbf{v}$,
\begin{eqnarray} \nonumber 
\mathbf{v}&=& \int_{\mathbb{R}^d} \Phi (x-y) Q(y)\mathbf{u}_i(y) \; dy+\int_{\mathbb{R}^d} \Phi (x-y) Q(y)\mathbf{v}(y) \; dy \\ \label{eq_LSv}
&=& \mathbf{f} +\int_{\mathbb{R}^d} \Phi (x-y) Q(y)\mathbf{v}(y) \; dy, 
\end{eqnarray}
where we have written the first term in the right hand side as a generic smooth function $\mathbf{f}:\mathbb{R}^d\to \mathbb{R}^d$. 

From now on we assume that, for some $\rho>0$, the components of $Q$, $Q_{\alpha \beta}$, and those of $ \mathbf{f}=(f_1,f_2,...,f_d)^T$ satisfy,
\begin{equation}
supp \; Q_{\alpha \beta} \subset \overline{B}(0,\rho), \; \; Q_{\alpha \beta}\in W^{\nu,2}(\mathbb{R}^d), \; \; f_\alpha \in W^{\nu,2}_{loc}(\mathbb{R}^d), \; \; \nu>d/2.
\label{eq:reg_as}
\end{equation}
In particular this guarantees that the components of $Q$ and $\mathbf{f}$ are continuous functions. 

Given $R>2\rho$, we define
$$
G_R=\left\{ x=(x_1,x_2,...,x_d)\in \R^d \; : \; |x_k|<R,\; k=1,2,..,d \right\} .
$$

Problem (\ref{eq_LSv}) on $G_R$ is equivalent to the following: given $\mathbf{f}$ and the $d\times d$ matrix function $Q$, find $\mathbf{v}:G_{R}\to \mathbb{R}^d$ solution of 
\begin{equation} \label{eq_LSv2}
\mathbf{v}=\mathbf{f}+\int_{G_R} \Phi (x-y) Q(y)\mathbf{v}(y) \; dy ,\qquad (x\in G_R). 
\end{equation}
Solving \eqref{eq_LSv2} in $x\in G_R$ allows us to obtain $\mathbf{v}$ in $G_R$   . Once this is known, the solution in $\mathbb{R}^d\backslash G_R $   can be obtained by simple integration using (\ref{eq_LS}). This allows us to localize the problem to the bounded domain $G_R$.

The idea now is to use \eqref{eq_LSv2} to approximate the solution $\mathbf{v}$ in the smaller ball $x\in \overline{B}(0,\rho)\subset G_R$. For those points, only the values of $\Phi$ in the ball $\overline{B}(0,2\rho)$ are involved and therefore, changing $\Phi$ outside this ball does not affect to the solution $\mathbf{v}(x)$. This allows us to localize the Green tensor function to a compact support function in $G_R$ without changing the solution. We consider a smooth cutting of the form 
\begin{equation} \label{mat_K}
K(x)=\Phi(x) \psi(|x|), \quad x\in G_R, \quad R>2\rho,
\end{equation}
where $\psi:[0,\infty)\to \mathbb{R}$ satisfy the conditions
$$
\psi \in C^\infty[0,\infty), \quad  \psi(r)=1 \mbox{ for $0\leq r \leq 2\rho$,} \quad  \psi(r)=0 \mbox{ for $r\geq R$.}
$$

Analogously, we truncate $\mathbf{f}$ by considering $\mathbf{f}(x)\psi (|x|)$, that we still denote $\mathbf{f}$ to simplify.

A more drastic cutting using the characteristic function of the  ball $B(0,R)$ is also possible but this is more convenient  as we comment below. 

Once localized the problem we extend $\mathbf{v},$ $\mathbf{f}$, the components of the matrixes $Q$ and $K$ to $R$-periodic functions for which we still use same notation. Thus, we change \eqref{eq_LSv2} by a multiperiodic integral equation 
\begin{equation} \label{eq_LSvp}
\mathbf{v}=\mathbf{f}+\int_{G_R} K (x-y) Q(y)\mathbf{v}(y) \; dy ,\qquad (x\in G_R). 
\end{equation}
The smooth cutting of $\mathbf{f}$ and $\Phi(x)$ guarantees a smooth periodic extension from $G_R$ to $\mathbb{R}^d$. 

The solvability of \eqref{eq_LSvp} is obviously deduced from the one of \eqref{eq_LSv2}. Moreover, once solved 
\eqref{eq_LSvp} we obtain the solution of \eqref{eq_LSv2} in $B(0,\rho)$ and we can extend $ \mathbf{v}$ to $\mathbb{R}^d$ by 
\begin{equation} \label{eq_sol_out}
\mathbf{v}=\mathbf{f}+\int_{B(0,\rho)} \Phi (x-y) Q(y)\mathbf{v}(y) \; dy ,\qquad x\in \mathbb{R}^d, 
\end{equation}
where  $\mathbf{f}$ in (\ref{eq_sol_out}) represents the original function, i.e. before cutting and periodizing. 

\section{Finite dimensional trigonometric space}

The family of exponentials
\begin{equation} \label{eq_fij}
\varphi_j(x)=\frac{e^{i\pi j \cdot x/R}}{(2R)^{d/2}}, \quad  j=(j_1,j_2,...,j_d)\in\Z^d,
\end{equation}
constitutes an orthonormal basis on $L^2(G_R)$ with the norm
$$\| u \|_0^2=\int_{G_R}|u(x)|^2 dx.$$ We also introduce the space $H^\eta=H^\eta(G_R)$ which consists of $dR-$multiperiodic functions (distributions) having finite norm
$$
\| u \|_\eta=\left( \sum_{j\in\Z^d}(1+ |j|)^{2\eta} |\hat u(j)|^2 \right)^{1/2} ,
$$
with
$$
\hat u(j)=  \int_{G_R} u(x)\overline{\varphi_j(x)} dx, \quad j\in \Z^d,
$$
the Fourier coefficients of $u$.

We now introduce a finite dimensional approximation of $H^\eta$. Let us consider $h=2R/N$ with $N\in \N$ and a mesh on $G_R$ with grid points $jh$, $j\in \Z_h^d$ and
$$
\Z_h^d=\left\{ j=(j_1,j_2,...,j_d)\in \mathbb{Z}^d \; : \; -\frac{N}{2} \leq j_k < \frac{N}{2}, \; k=1,2,...,d \right\}.
$$
We also consider $\mathcal{T}_h$ the finite dimensional subspace of trigonometric polynomials of the form
$$
v_h=\sum_{j\in Z_h^d} c_j \varphi_j, \qquad c_j\in \C.
$$
Any $v_h\in \mathcal{T}_h$ can be represented either through the Fourier coefficients
or the nodal values,
$$
v_h(x)=\sum_{j\in Z_h^d} \hat v_h(j) \; \varphi_j(x)=\sum_{j\in Z_h^d} v_h(jh) \; \varphi_{h,j}(x),
$$
where $\varphi_{h,j}(kh)=\delta_{jk}$, more specifically
$$
\varphi_{h,j}(x)=\frac{h^d}{(2R)^d}\sum_{k\in \Z_h^d} e^{i\pi k \cdot (x-jh)/R}.
$$

For a given $v_h \in \mathcal{T}_h$, the nodal values $\bar v_h$ and the Fourier coefficients $\hat v_h$ are related by the discrete Fourier transform $\mathcal{F}_h$ as follows,
$$
\hat v_h =h^d\mathcal{F}_h \bar v_h, \qquad \bar v_h =\frac{1}{h^d}\mathcal{F}_h^{-1} \hat v_h,
$$
where, as usual, $\mathcal{F}_h$ relates the sequence $x(n)$ ($n=(n_1,n_2,...,n_d)$) with $X(j)$ according to
\begin{eqnarray*}
X(j)&=&\sum_{n_1,n_2,...,n_d=-N/2}^{N/2-1} x(n)e^{-i2\pi n\cdot j /N}, \qquad j=(j_1,j_2, ...,j_d), \\
&& j_k=-N/2,-N/2+1,...,N/2-1 .
\end{eqnarray*}
This definition coincides with the usual one in numerical codes (as MATLAB) up to a translation, since it considers instead $j_k=0, ..., N-1$. This must be taken into account in the implementation. 

The orthogonal projection from $H^\eta$ to $\mathcal{T}_h$ is defined by the formula
$$
P_h v=\sum_{j\in Z_h^d} \hat v(j) \varphi_j,
$$
while the interpolation projection $S_hv$ is defined, when $\eta>d/2$, by
$$
S_hv\in \mathcal{T}_h, \quad (S_hv)(jh)=v(jh), \quad j\in \Z^d_h.
$$
(We need that  $v$ is continuous in order to be able to define $S_hv$. Since $H^\eta \subset   W^{\eta, 2}_{loc}(\mathbb{R}^d)$, a function $v \in H^\eta$ with $\eta >d/2$ is continuous).

For a vector valued function $\mathbf{v}= (v^1, v^2,...,v^d)^T$ we maintain the same notation, i.e.   
$$
P_h \mathbf{v} = \left( P_h(v^1), P_h(v^2),...,P_h(v^d)	\right)^T \in \mathcal{T}_h^d, 
$$
$$
S_h \mathbf{v} = \left( S_h(v^1), S_h(v^2),...,S_h(v^d)	\right)^T \in \mathcal{T}_h^d,
$$
and analogously for the discrete Fourier transform $\mathcal{F}_h$ of a vector $\mathbf{v_h} \in (\mathcal{T}_h)^d$.  $\mathbf{v} \in  (H^\eta)^d$ if 
$$\|\mathbf{v}\|_\eta=\max_{k=1,2, ...,d }\|v^k\|_\eta < \infty.$$

\section{Trigonometric collocation method}

The trigonometric collocation method to solve \eqref{eq_LSvp} reads, 
\begin{equation} \label{eq_LSv3hp}
\mathbf{v_h}=\mathbf{f_h}+S_h(\mathcal{K}(Q\mathbf{v_h})), \quad \mathbf{v_h} \in (\mathcal{T}_h)^d,
\end{equation}
where $\mathbf{f_h}=S_h \mathbf{f}$ and
\begin{equation}
\mathcal{K}(\mathbf{w_h})(x)=\int_{G_R} K (x-y) \mathbf{w_h}(y) \; dy, \quad \mathbf{w_h} \in (\mathcal{T}_h)^d.
\label{eq:kernelh}
\end{equation}
Note that here $K$ and $Q$ are $d\times d$ matrixes while $\mathbf{w_h}$ is a column vector with each component $ w^i_h \in \mathcal{T}_h$. The integral applies to each component of the integrand vector function. 

From the numerical point of view there is a difficulty in the discrete 
formulation  \eqref{eq_LSv3hp} since $Q\mathbf{v_h}\notin (\mathcal{T}_h)^d$ and should be approximated. Therefore we modify the discrete formulation as follows, 
\begin{equation} \label{eq_LSv3h}
\mathbf{v_h}=\mathbf{f_h}+\mathcal{K}(S_h(Q\mathbf{v_h})), \quad \mathbf{v_h} \in (\mathcal{T}_h)^d.
\end{equation} 
The operator $\mathcal{K}$ leaves invariant the subspace $(\mathcal{T}_h)^d$ and \eqref{eq_LSv3h} is consistent. 

Since each one of the components in the matrix $K$ is a periodic function the eigenfunctions of the associated convolution operator are known to be $\varphi_j$, defined in \eqref{eq_fij}, while their eigenvalues $\widehat K_{\alpha\beta}(j)$ are the Fourier coefficients, i.e.
\begin{equation} \label{eq_foco}
\int_{G_R} K_{\alpha \beta} (x-y) \; \varphi_j (y) \; dy = \widehat K_{\alpha\beta}(j) \varphi_j (x), \quad j\in \mathbb{Z}^d.
\end{equation}

We can write \eqref{eq_LSv3h} in matrix form too. Let us interpret $\mathbf{v_h}=(v_h^1,v_h^2,...,v_h^d)^T$ and $\mathbf{f_h}=(f_h^1,f_h^2,...,f_h^d)^T$ as $dN^d$-column vectors with the node values of the $d$ components respectively. We also write the matrix form of the discrete Fourier transform as $\mathcal{F}_h$ and observe that $S_h(Q\mathbf{v_h})$ can be interpreted as the vector obtained by the product of the components of the node values of $Q$. We obtain the matrix formulation, 
\begin{equation} \label{eq_LSv3m}
\mathbf{v_h}=\mathbf{f_h}+\mathcal{F}_h^{-1}\widehat  K_h \mathcal{F}_h \; (Q_h)_{diag} \mathbf{v_h}, 
\end{equation}
where $(Q_h)_{diag}$ is the $dN^d\times dN^d$ matrix whose components are the $N^d\times N^d$ matrixes $ diag(S_h(Q_{\alpha \beta})),$  $\alpha,\beta=1,...,d,$
contain the nodal values of $S_h(Q_{\alpha \beta})$ in the diagonal. Here $\widehat  K_h$ is also a $dN^d \times dN^d$ square matrix with components $ K_{\alpha \beta} $, $ \alpha,\beta=1,...,d,$ where each $\hat K_{\alpha \beta}$ is a diagonal $N^d\times N^d$ matrix with the Fourier coefficients of $K_{\alpha \beta}$. 

Thus, the matrix form of the collocation method at the nodes reads,
\begin{equation}
A_h{\bf v_h} = {\bf f_h}, \quad A_h=I-\mathcal{F}_h^{-1}\widehat K_h \mathcal{F}_h \; (S_h)_{diag},
\label{eq:col_met_nodes}
\end{equation}
and at the Fourier coefficients,
\begin{equation}
\widehat A_h{\bf \widehat v_h} = {\bf \widehat g_h}, \quad \widehat A_h=I-\widehat  K \mathcal{F}_h \; (S_h)_{diag}\mathcal{F}_h^{-1}, \quad {\bf \widehat g_h}=\mathcal{F}_h {\bf  f_h} .
\label{eq:col_met_fou}
\end{equation}

\section{Convergence of the trigonometric collocation method}

In this section we prove a convergence result for the collocation method described above. The proof requires three preparatory lemmas. 
\begin{lemma} \label{lema_vainikkko}
 Let $\eta > d/2$  be.
\begin{enumerate}
\item If $u,v \in H^\eta$ then $uv \in H^\eta $ and  $\|uv \|_\eta \leq c_\eta \| u\|_\eta \| u\|_\eta$.
\item $\|\mathbf{v}-S_h \mathbf{v}\|_\nu \leq c_{\nu , \eta, R} h^{\eta- \nu}\| \mathbf{v} \|_\eta \quad \textrm{for } 0 \leq \nu \leq \eta, \;\; \mathbf{v} \in (H^\eta)^d$.
\end{enumerate}

\end{lemma}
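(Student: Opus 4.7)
The plan is to establish both claims via Fourier-side estimates on $H^\eta(G_R)$, treating the vector-valued version componentwise at the end since $\|\mathbf{v}\|_\eta$ is defined as the maximum of the component norms.

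For part 1 (the algebra property), I would start from the convolution identity for the Fourier coefficients of a product on the $2R$-periodic torus:
$$\widehat{uv}(j) = \frac{1}{(2R)^{d/2}}\sum_{k \in \Z^d} \widehat u(k)\,\widehat v(j-k).$$
Then I would invoke Peetre's inequality in the form
$$(1+|j|)^{\eta} \leq c_\eta\bigl((1+|k|)^{\eta} + (1+|j-k|)^{\eta}\bigr),$$
split the sum into two symmetric pieces, and apply Cauchy--Schwarz to each. In one piece the inner factor will look like $\sum_k (1+|k|)^{-2\eta}(1+|k|)^{2\eta}|\widehat u(k)|^2$; the hypothesis $\eta>d/2$ is used precisely to ensure $\sum_k(1+|k|)^{-2\eta}<\infty$, after which the convolution of $(1+|\cdot|)^{2\eta}|\widehat u|^2$ with $|\widehat v|^2$ (up to the weight) gives $\|u\|_\eta^2\|v\|_\eta^2$. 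Adding the symmetric piece yields the stated bound (with $\|v\|_\eta$ in place of the mistyped $\|u\|_\eta$).

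For part 2, I would decompose
$$\mathbf{v}-S_h\mathbf{v} = (\mathbf{v}-P_h\mathbf{v}) + (P_h\mathbf{v}-S_h\mathbf{v})$$
and estimate each term in $H^\nu$. The first piece is immediate: on the complement of $\Z_h^d$ one has $|j|\geq N/2 = R/h$, so
$$\|\mathbf{v}-P_h\mathbf{v}\|_\nu^2 = \!\!\sum_{j\notin\Z_h^d}\!(1+|j|)^{2\nu}|\widehat{\mathbf{v}}(j)|^2 \leq (R/h)^{2(\nu-\eta)}\|\mathbf{v}\|_\eta^2,$$
which gives the desired $h^{\eta-\nu}$ factor since $\nu\le\eta$. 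For the second piece the key tool is the aliasing identity for the interpolant
$$\widehat{(S_h v)}(j) = \sum_{k\in\Z^d}\widehat v(j+Nk), \qquad j\in\Z_h^d,$$
obtained from the fact that $\varphi_{j+Nk}$ and $\varphi_j$ agree on the grid $hj$. Subtracting $\widehat v(j)$ and applying Cauchy--Schwarz with the weight $(1+|j+Nk|)^{2\eta}$ gives
$$\bigl|\widehat{(P_h v-S_h v)}(j)\bigr|^2 \leq \Bigl(\sum_{k\neq 0}(1+|j+Nk|)^{-2\eta}\Bigr)\sum_{k\neq 0}(1+|j+Nk|)^{2\eta}|\widehat v(j+Nk)|^2.$$
For $j\in\Z_h^d$ and $k\neq 0$ one has $|j+Nk|\gtrsim N\sim 1/h$, so the first factor is $O(h^{2\eta-d})$; after multiplying by $(1+|j|)^{2\nu}\le (1+|j+Nk|)^{2\nu}$ (since $\nu\ge 0$ and the shift increases the norm) and summing over $j\in\Z_h^d$, the remaining double sum repacks into $\|\mathbf{v}\|_\eta^2$ with a net $h^{2(\eta-\nu)}$ factor.

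I expect the main obstacle to be the aliasing step: combining the decay factor $(1+|j+Nk|)^{-2\eta}$ uniformly over $j\in\Z_h^d$, $k\neq 0$ in a way that reassembles into the global $\|\mathbf{v}\|_\eta$ norm without losing the optimal power of $h$. Once this is handled for scalar components, the vector statement follows at once from the definition $\|\mathbf{v}\|_\eta=\max_k\|v^k\|_\eta$.
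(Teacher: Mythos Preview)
The paper does not actually prove this lemma: it simply refers the reader to \cite{V}, \cite{SV} (for $d=2$) and \cite{KV}. Your proposal is precisely the standard argument that appears in those references---the algebra property via the discrete convolution identity, Peetre's inequality, and Cauchy--Schwarz using $\sum_j(1+|j|)^{-2\eta}<\infty$ when $\eta>d/2$; and the interpolation estimate via the decomposition $\mathbf v-S_h\mathbf v=(\mathbf v-P_h\mathbf v)+(P_h\mathbf v-S_h\mathbf v)$ together with the aliasing formula. So your route is correct and coincides with what the cited sources do.

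Two minor points in your sketch of part 2 are worth tightening. First, the inequality $(1+|j|)^{2\nu}\le(1+|j+Nk|)^{2\nu}$ is only true up to a dimensional constant: for $j\in\Z_h^d$ one has $|j|\le\sqrt d\,N/2$ while $|j+Nk|\ge N/2$ for $k\neq 0$, so $(1+|j|)\le C_d(1+|j+Nk|)$. Second, the bound on $\sum_{k\neq 0}(1+|j+Nk|)^{-2\eta}$ is in fact $O(N^{-2\eta})=O(h^{2\eta})$ (not $O(h^{2\eta-d})$), since $|j+Nk|\gtrsim N|k|$ and $\sum_{k\neq 0}|k|^{-2\eta}<\infty$; combining this with the crude bound $(1+|j|)^{2\nu}\le CN^{2\nu}$ and then summing the second factor over $j\in\Z_h^d$, $k\neq 0$ (a disjoint covering of $\Z^d\setminus\Z_h^d$) yields the desired $h^{2(\eta-\nu)}\|\mathbf v\|_\eta^2$. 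Neither imprecision affects the validity of the argument. You also correctly spotted the typo in the statement ($\|u\|_\eta\|v\|_\eta$, not $\|u\|_\eta\|u\|_\eta$).
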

The proof of this Lemma can be found in \cite{V} or \cite{SV} (dimension two) and \cite{KV}.

\begin{lemma} \label{le_coef}
The Fourier coefficients of the components of the matrix $K$ defined in \eqref{mat_K} satisfy the following estimate
\begin{equation} \label{eq_le_coef}
|\widehat K_{\alpha \beta}(j)| \leq c |j|^{-2} \log |j|,  \quad j\in \mathbb{Z}^d, \quad  |j|\neq 0, \quad d=2,3,
\end{equation} 
for some constant $c>0$ that depends only on $\omega$, $\lambda$, $\mu$,   $R$, $\rho$ and $d$. 
\end{lemma}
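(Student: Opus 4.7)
My plan is to exploit the Kupradze decomposition of the Lam\'e fundamental tensor,
\begin{equation*}
\Phi_{\alpha\beta}(x) = \frac{\delta_{\alpha\beta}}{\mu}\,\Phi_s(x) + \frac{1}{\omega^{2}}\,\partial_\alpha \partial_\beta\bigl(\Phi_s(x) - \Phi_p(x)\bigr),
\end{equation*}
where $\Phi_s,\Phi_p$ denote the outgoing fundamental solutions of the scalar Helmholtz equations $(\Delta + k_s^2)\Phi_s = -\delta$ and $(\Delta + k_p^2)\Phi_p = -\delta$. Since $K_{\alpha\beta}=\Phi_{\alpha\beta}\,\psi$ is compactly supported inside $G_R$, the periodic Fourier coefficient $\widehat{K}_{\alpha\beta}(j)$ coincides, up to a normalising constant, with the Euclidean Fourier transform of $K_{\alpha\beta}$ evaluated at $\xi_j=\pi j/R$. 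It therefore suffices to estimate this continuous Fourier transform for large $|\xi_j|$, term by term in the decomposition above.

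For the first piece $\Phi_s\psi$ I would use the Helmholtz-equation trick: since $\psi\equiv 1$ near the origin, $\nabla\psi$ and $\Delta\psi$ vanish where $\Phi_s$ is singular, so Leibniz yields
\begin{equation*}
(\Delta+k_s^2)(\Phi_s\psi)=-\psi(0)\,\delta + r_s,
\end{equation*}
with $r_s\in C_c^\infty(\mathbb{R}^d)$. Taking the Fourier transform and dividing by $k_s^2-|\xi|^2$ gives $|\widehat{\Phi_s\psi}(\xi)|\le C/\bigl||\xi|^2-k_s^2\bigr|$; since the lattice $\{\xi_j\}$ keeps a positive distance from the sphere $|\xi|=k_s$ depending only on $R,\omega,\mu$, this is $O(|j|^{-2})$, and the same bound follows for $\Phi_p\psi$.

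For the second piece I would set $g:=\Phi_s-\Phi_p$ and shift the derivatives off $g$ via Leibniz,
\begin{equation*}
(\partial_\alpha\partial_\beta g)\,\psi=\partial_\alpha\partial_\beta(g\psi)-(\partial_\alpha g)(\partial_\beta\psi)-(\partial_\beta g)(\partial_\alpha\psi)-g\,(\partial_\alpha\partial_\beta\psi).
\end{equation*}
The last three terms are supported on $\{\nabla\psi\neq 0\}$, a set where $g$ is $C^\infty$, so their Fourier transforms are rapidly decaying. The first term contributes $-\xi_\alpha\xi_\beta\,\widehat{g\psi}(\xi)$ after Fourier transform, and the key point is that the leading singularities of $\Phi_s$ and $\Phi_p$ cancel in $g$, which is therefore bounded at the origin and satisfies $(\Delta+k_s^2)g=(k_p^2-k_s^2)\Phi_p$. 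Applying the same Helmholtz trick to $g\psi$ and inserting the bound already obtained for $\widehat{\Phi_p\psi}$ gives $|\widehat{g\psi}(\xi)|=O(|\xi|^{-4})$, hence $|\xi_\alpha\xi_\beta\,\widehat{g\psi}(\xi)|=O(|\xi|^{-2})$.

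The hard part will be extracting the estimate in a way that is uniform in $d=2$ and $d=3$ and captures the logarithmic correction. In $d=3$ the singularities of $\Phi_{s,p}$ are of the form $|x|^{-1}$ and the scheme above is essentially sharp without a $\log$; in $d=2$, however, the fundamental solutions carry genuine $\log|x|$ singularities and the small-$|x|$ expansion of $g$ contains terms $|x|^{2k}\log|x|$ whose second derivatives produce logarithmic kernels coupled with Calder\'on--Zygmund angular factors $x_\alpha x_\beta/|x|^2$. Transferring these estimates to the lattice Fourier coefficients is where the extra $\log|j|$ factor in \eqref{eq_le_coef} originates, and tracking the explicit dependence of the constant on $\omega,\lambda,\mu,R,\rho$ using only the smoothness of $\psi$ is the most delicate computational step.
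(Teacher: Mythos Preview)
Your approach is correct and genuinely different from the paper's. The paper works entirely in physical space: it splits the integral for $\widehat{K}_{\alpha\beta}(j)$ at radius $|j|^{-1}$, bounds the inner ball by size, and on the annulus $|j|^{-1}\le|x|\le R$ writes $\varphi_{-j}=-\tfrac{R^{2}}{\pi^{2}|j|^{2}}\Delta\varphi_{-j}$ and applies Green's formula once. The resulting volume term contains $\Delta K_{\alpha\beta}$, which is $O(|x|^{-d})$ near the origin, and integrating this over the annulus is exactly what produces the factor $\log|j|$. Your Fourier-side argument via the Kupradze decomposition is more structural and in fact sharper: from
\[
(k_s^{2}-|\xi|^{2})\,\widehat{g\psi}(\xi)=(k_p^{2}-k_s^{2})\,\widehat{\Phi_p\psi}(\xi)+\widehat{r_g}(\xi),
\qquad
(k_p^{2}-|\xi|^{2})\,\widehat{\Phi_p\psi}(\xi)=-1+\widehat{r_p}(\xi),
\]
with $r_g,r_p\in C_c^\infty(\mathbb{R}^d)$, you already obtain $|\widehat{g\psi}(\xi)|=O(|\xi|^{-4})$ and hence $|\widehat{K}_{\alpha\beta}(j)|=O(|j|^{-2})$ \emph{without} any logarithm, uniformly in $d=2,3$. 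Your final paragraph, where you anticipate having to extract the $\log|j|$ from the $|x|^{2k}\log|x|$ terms of the two-dimensional expansion, is therefore misplaced: your own scheme has bypassed that difficulty, and the bound in the lemma is simply not sharp. Two minor corrections are worth making. First, the claim that the lattice $\{\pi j/R\}$ keeps a positive distance from the sphere $|\xi|=k_s$ is false for generic parameters, but it is also unnecessary: for $|j|$ large one has $\bigl||\xi_j|^{2}-k_s^{2}\bigr|\ge\tfrac{1}{2}|\xi_j|^{2}$, and the finitely many remaining $j$ are handled by the trivial bound $|\widehat{K}_{\alpha\beta}(j)|\le(2R)^{-d/2}\|K_{\alpha\beta}\|_{L^{1}}$. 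Second, you should state explicitly that $g\psi$ is bounded and compactly supported (the leading singularities of $\Phi_s$ and $\Phi_p$ cancel), so that the identity $\widehat{\partial_\alpha\partial_\beta(g\psi)}(\xi)=-\xi_\alpha\xi_\beta\,\widehat{g\psi}(\xi)$ holds pointwise and no distributional subtleties arise.
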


We prove this lemma in section \ref{coeficientes} below.

\begin{lemma}
Assume that all the components in $Q$ satisfy $Q_{\alpha \beta}\in H^\eta$, with $\eta > d/2$  and let $\varepsilon > 0$ be such that    $\eta \geq 2-\varepsilon $ Then, 
\begin{equation} \label{eq_Kh1}
\| \mathcal{K}(Q \cdot) - \mathcal{K}(S_h(Q \cdot)) \|_{\mathcal{L}((H^\eta)^d,(H^\eta)^d)} \leq c_{\eta, \varepsilon ,R,Q}h^{2-\varepsilon},
\end{equation}
where $\mathcal{L}((H^\eta)^d,(H^\eta)^d)$ denotes the space of linear and continuous functions in $(H^\eta)^d$
(of course the constant $c_{\eta, \varepsilon ,R,Q}$  also depends on the constant of the Lemma \ref{le_coef}) .

\end{lemma}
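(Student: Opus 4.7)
The plan is to factor the operator difference as $\mathcal{K}(Q\cdot) - \mathcal{K}(S_h(Q\cdot)) = \mathcal{K}\circ(I - S_h)\circ M_Q$, where $M_Q$ denotes multiplication by $Q$, and then estimate each piece separately, exploiting the fact that $\mathcal{K}$ is a smoothing operator of order (essentially) $2$. The three ingredients I would combine are: (i) the algebra property $\|Q\mathbf{v}\|_\eta \leq c\|Q\|_\eta \|\mathbf{v}\|_\eta$ from Lemma \ref{lema_vainikkko}(1) (applied componentwise), (ii) the interpolation error bound $\|(I-S_h)\mathbf{w}\|_\nu \leq c h^{\eta-\nu}\|\mathbf{w}\|_\eta$ from Lemma \ref{lema_vainikkko}(2), and (iii) a mapping property of $\mathcal{K}$ deduced from Lemma \ref{le_coef}.

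The key step is (iii). Since $K$ is $R$-periodic and $\mathcal{K}$ is convolution against $K$ on $G_R$, the operator acts on Fourier coefficients componentwise via multiplication by $\widehat{K}_{\alpha\beta}(j)$ (up to a constant factor stemming from the normalization of $\varphi_j$). Lemma \ref{le_coef} gives $|\widehat K_{\alpha\beta}(j)|\leq c|j|^{-2}\log|j|$ for $|j|\neq 0$. Because $\log|j|\leq c_\varepsilon (1+|j|)^{\varepsilon}$ for every $\varepsilon>0$, we obtain $|\widehat K_{\alpha\beta}(j)|\leq c_\varepsilon (1+|j|)^{-(2-\varepsilon)}$. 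Hence, for any $\mu\geq 0$ and any component $w\in H^{\mu-(2-\varepsilon)}$,
\begin{equation*}
\|\mathcal{K}_{\alpha\beta} w\|_{\mu}^{2}=\sum_{j}(1+|j|)^{2\mu}|\widehat K_{\alpha\beta}(j)|^{2}|\widehat w(j)|^{2}\leq c_\varepsilon^{2}\sum_{j}(1+|j|)^{2(\mu-(2-\varepsilon))}|\widehat w(j)|^{2},
\end{equation*}
so $\mathcal{K}:(H^{\mu-(2-\varepsilon)})^d \to (H^\mu)^d$ continuously; in particular, with $\mu=\eta$, it maps $(H^{\eta-(2-\varepsilon)})^d$ into $(H^\eta)^d$ with norm $\lesssim c_{\eta,\varepsilon,R}$.

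With this in hand, I chain the estimates. Given $\mathbf{v}\in(H^\eta)^d$, set $\mathbf{w}:=Q\mathbf{v}$, which by (i) lies in $(H^\eta)^d$ with $\|\mathbf{w}\|_\eta\leq c_\eta\|Q\|_\eta\|\mathbf{v}\|_\eta$. The hypothesis $\eta\geq 2-\varepsilon$ ensures $\nu:=\eta-(2-\varepsilon)\geq 0$, so (ii) yields $\|(I-S_h)\mathbf{w}\|_{\eta-(2-\varepsilon)}\leq c_{\eta,\varepsilon,R}\, h^{2-\varepsilon}\|\mathbf{w}\|_\eta$. Finally, applying (iii) gives
\begin{equation*}
\|\mathcal{K}(Q\mathbf{v})-\mathcal{K}(S_h(Q\mathbf{v}))\|_\eta=\|\mathcal{K}((I-S_h)\mathbf{w})\|_\eta\leq c\,\|(I-S_h)\mathbf{w}\|_{\eta-(2-\varepsilon)}\leq c_{\eta,\varepsilon,R,Q}\,h^{2-\varepsilon}\|\mathbf{v}\|_\eta,
\end{equation*}
which is the claimed bound.

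The main obstacle is the logarithmic factor in Lemma \ref{le_coef}: without it, $\mathcal{K}$ would gain exactly two derivatives and the estimate would hold with exponent $2$; the $\log|j|$ forces an arbitrarily small loss $\varepsilon>0$ in the smoothing order, which in turn propagates to the final rate $h^{2-\varepsilon}$. The other mildly delicate point is verifying that the vector-valued convolution $\mathcal{K}$ inherits the componentwise Fourier-multiplier description (so that the scalar estimate from Lemma \ref{le_coef} can be plugged into each entry of the matrix $K$ and summed), but this is straightforward since the $\|\cdot\|_\eta$ norm on $(H^\eta)^d$ is the maximum of the component norms.
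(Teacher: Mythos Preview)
Your proof is correct and follows essentially the same approach as the paper: both arguments use the smoothing property $\mathcal{K}\in\mathcal{L}((H^{\eta-(2-\varepsilon)})^d,(H^{\eta})^d)$ deduced from Lemma~\ref{le_coef}, then apply Lemma~\ref{lema_vainikkko}(2) to bound $\|Q\mathbf{v}-S_h(Q\mathbf{v})\|_{\eta-(2-\varepsilon)}$, and finally the algebra property of Lemma~\ref{lema_vainikkko}(1) to control $\|Q\mathbf{v}\|_\eta$. Your write-up is simply more explicit about how the logarithmic loss in Lemma~\ref{le_coef} translates into the $\varepsilon$-loss in the smoothing order, and about why the hypothesis $\eta\geq 2-\varepsilon$ is needed to invoke Lemma~\ref{lema_vainikkko}(2).
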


\begin{proof}
As a consequence of Lemma \ref{le_coef} we have that $\mathcal{K} \in \mathcal{L} ((H^{\eta-(2-\varepsilon)})^d,(H^{\eta })^d)$ for any $\eta \in \mathbb{R}$ and $\varepsilon >0$. Assume that $\mathbf{v} \in (H^\eta)^\eta $.    By Lemma \ref{lema_vainikkko},
$$\| \mathcal{K}(Q  \mathbf{v}  ) - \mathcal{K}(S_h(Q  \mathbf{v} )) \|_\eta \leq c\|Q \mathbf{v}-S_hQ \mathbf{v}  \|_{\eta -(2-\varepsilon)} \leq   c_{\eta, \varepsilon ,R}h^{2-\varepsilon}  \| Q \mathbf{v}\|_\eta $$
$$ \leq c_{\eta, \varepsilon ,R}  \max_{\alpha , \beta}{\|Q_{\alpha \beta}} \|_\eta   h^{2-\varepsilon}      \| \mathbf{v} \|_\eta  \leq c_{\eta, \varepsilon ,R,Q}h^{2-\varepsilon}      \| \mathbf{v} \|_\eta. $$

\end{proof}

We have the following result
\begin{theorem} \label{th_conv}
Assume that $Q$ and $\mathbf{f}$ satisfy \eqref{eq:reg_as} and the homogeneous problem \eqref{eq_LS} with $\mathbf{u}_i=0$ has only the trivial solution. Then, equation \eqref{eq_LSvp} has a unique solution $\mathbf{v} \in (H^\nu)^d$, $\eta > d/2$, and the collocation equation \eqref{eq_LSv3h} has also a unique solution $\mathbf{v_h} \in (\mathcal{T}_h)^d$ for sufficiently small $h$. Moreover, 
\begin{equation}
\| \mathbf{v_h} -\mathbf{v} \|_\eta \leq c_{\eta, \nu, R, Q} \| \mathbf{v} - S_h \mathbf{v} \|_\eta \leq c_{\eta , \nu , R, Q} \|\mathbf{v} \|_\nu  h^{\nu -\eta}, \; \;  \nu \geq \eta. 
\label{eq:error}
\end{equation} 
\end{theorem}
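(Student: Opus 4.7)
The plan is to recast the equation abstractly as a Fredholm-type problem on $(H^\eta)^d$ and carry out a standard Vainikko-type perturbation analysis. Define $T, T_h : (H^\eta)^d \to (H^\eta)^d$ by $T\mathbf{v}=\mathcal{K}(Q\mathbf{v})$ and $T_h\mathbf{v}=\mathcal{K}(S_h(Q\mathbf{v}))$, so that \eqref{eq_LSvp} reads $(I-T)\mathbf{v}=\mathbf{f}$ and \eqref{eq_LSv3h} reads $(I-T_h)\mathbf{v_h}=S_h\mathbf{f}$.

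By Lemma \ref{lema_vainikkko}(1), multiplication by $Q$ is bounded on $(H^\eta)^d$, and by Lemma \ref{le_coef} the convolution operator $\mathcal{K}$ gains $2-\varepsilon$ derivatives for any $\varepsilon>0$, so $T$ maps $(H^\eta)^d$ continuously into $(H^{\eta+2-\varepsilon})^d$. Compactness of the embedding $(H^{\eta+2-\varepsilon})^d \hookrightarrow (H^\eta)^d$ then makes $T$ compact, so $I-T$ is a compact perturbation of the identity. The hypothesis of trivial kernel combined with the Fredholm alternative gives unique solvability of \eqref{eq_LSvp} with bounded inverse $(I-T)^{-1}$.

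The previous lemma yields $\|T-T_h\|_{\mathcal{L}((H^\eta)^d,(H^\eta)^d)} \leq c\,h^{2-\varepsilon}$. Writing $I-T_h = (I-T)\bigl(I-(I-T)^{-1}(T-T_h)\bigr)$ and invoking the Neumann series, $(I-T_h)^{-1}$ exists with bound uniform in $h$ for sufficiently small $h$. Since $\mathcal{K}$ leaves $(\mathcal{T}_h)^d$ invariant, $T_h$ takes values in $(\mathcal{T}_h)^d$, and as $S_h\mathbf{f}\in(\mathcal{T}_h)^d$, the solution $\mathbf{v_h}=(I-T_h)^{-1}S_h\mathbf{f}$ automatically lies in $(\mathcal{T}_h)^d$.

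For the error I would start from $(I-T_h)(\mathbf{v}-\mathbf{v_h}) = (I-T_h)\mathbf{v}-S_h\mathbf{f}$ and, using the identity $\mathcal{K}S_h=S_h\mathcal{K}S_h$ (which holds because $\mathcal{K}$ preserves $(\mathcal{T}_h)^d$) together with $\mathbf{f}=(I-T)\mathbf{v}$, rewrite this as
\[
(I-T_h)(\mathbf{v}-\mathbf{v_h}) = (I-S_h)\mathbf{v} + S_h\mathcal{K}(I-S_h)(Q\mathbf{v}).
\]
The first summand is exactly the nodal interpolation error $\mathbf{v}-S_h\mathbf{v}$; the second is controlled by the smoothing property of $\mathcal{K}$ and Lemma \ref{lema_vainikkko}(2) applied to $Q\mathbf{v}$, yielding $O(h^{2-\varepsilon})\|\mathbf{v}\|_\eta$. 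Applying the uniformly bounded $(I-T_h)^{-1}$ produces the first inequality in \eqref{eq:error}, and invoking Lemma \ref{lema_vainikkko}(2) once more with exponent $\nu$ yields the second. The main obstacle I anticipate is the bookkeeping in this Strang-type identity: the second term must be absorbed into the interpolation-error contribution $\|\mathbf{v}-S_h\mathbf{v}\|_\eta$, exploiting the extra regularity of $\mathbf{v}$ that is inherited from $\mathbf{v}=\mathbf{f}+\mathcal{K}(Q\mathbf{v})$, so that the quasi-optimal bound appears cleanly rather than with a spurious additive $h^{2-\varepsilon}\|\mathbf{v}\|_\eta$ term.
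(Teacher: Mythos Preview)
Your proposal is correct and follows essentially the same route as the paper: compactness of $\mathcal{K}(Q\cdot)$ plus the Fredholm alternative for unique solvability, the Neumann-series perturbation via $\|T-T_h\|\le c\,h^{2-\varepsilon}$ for the discrete problem, and the same residual identity $(I-T_h)(\mathbf{v}-\mathbf{v_h})=(\mathbf{v}-S_h\mathbf{v})+\bigl(S_h\mathcal{K}-\mathcal{K}S_h\bigr)(Q\mathbf{v})$ for the error. Your rewriting of the second term through $\mathcal{K}S_h=S_h\mathcal{K}S_h$ is a minor variant of the paper's device of inserting $P_h$ and using $\mathcal{K}P_h=P_h\mathcal{K}$; both exploit that $\mathcal{K}$ preserves $(\mathcal{T}_h)^d$ and lead to the same bound $c\,h^{2-\varepsilon+\nu-\eta}\|\mathbf{v}\|_\nu$. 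As for the obstacle you anticipate, the paper's own proof likewise only establishes the final rate $\|\mathbf{v_h}-\mathbf{v}\|_\eta\le c\,h^{\nu-\eta}\|\mathbf{v}\|_\nu$ directly, with the intermediate quasi-optimal inequality not argued separately.
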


\begin{proof}
This is a generalization of the analogous proof for the scalar case in \cite{V}. From Lemmas \ref{lema_vainikkko} and \ref{le_coef} we have $\mathcal{K}(Q \cdot) \in \mathcal{L}((H^\eta)^d,(H^\eta)^d)$ since
$$\|\mathcal{K} Q \mathbf{v}\|_\eta \leq c \| Q \mathbf{v}\|_{\eta}  \leq c_\eta \max_{\alpha , \beta}\|Q_{\alpha \beta}\|_\eta  \| \mathbf{v}\|_{\eta}  \leq c_{\eta , Q} \|  \mathbf{v}\|_{\eta}.$$

The existence and uniqueness for \eqref{eq_LSvp} is easily deduced from the inverse of the operator $I-\mathcal{K}(Q \cdot) \in \mathcal{L}((H^\eta)^d,(H^\eta)^d)$. In fact this inverse exists since $\mathcal{K}(Q \cdot)$ is compact and the homogeneous integral equation \eqref{eq_LSvp} with $\mathbf{f}=0$ has only the trivial solution, since if there were a non-zero solution $\mathbf{v}$, it would be a solution of (\ref{eq_LS}) with $u_i=0$ in $B(0,\rho)$ and extend to $\mathbb{R}^d$. 

Concerning the collocation equation \eqref{eq_LSv3h}, we write
$$I-\mathcal{K}(S_hQ \cdot)=I-\mathcal{K}(Q \cdot )+\mathcal{K}(Q \cdot )-\mathcal{K}(S_hQ \cdot).$$
 $I-\mathcal{K}(Q \cdot)$   admits an inverse in $ \mathcal{L}((H^\eta)^d,(H^\eta)^d)$. On the other hand  by (\ref{eq_Kh1}) we have
$$  \|\mathcal{K}(Q \cdot)  -\mathcal{K}(S_h Q \cdot) \|_{\mathcal{L}(H^\eta)^d, (H^\eta)^d)}  \leq c_{\eta, \varepsilon, R, Q} h^{2-\varepsilon}, \;\; \eta >d/2 \; \textrm{ and } \eta \geq  2-\varepsilon,$$
then if we take $h$ such that 
\begin{eqnarray*}
\|\mathcal{K}(Q \cdot)  -\mathcal{K}( S_hQ \cdot) \|_{\mathcal{L}(H^\eta)^d, (H^\eta)^d)} &\leq& c_{\eta, \varepsilon, R, Q} h^{2-\varepsilon} \\
 & <& \frac{1}{\| (I-\mathcal{K}(Q\cdot ))^{-1}  \|_{\mathcal{L}((H^\eta)^d, (H^\eta)^d)}} , 
\end{eqnarray*}
then we can guarantee the existence of the inverse of the operator $I-\mathcal{K}(S_hQ \cdot)$ in $\mathcal{L}((H^\eta)^d, (H^\eta)^d)$, (see Corollary 1.1.2 of \cite{SV}), and therefore the existence and uniqueness of solution $\mathbf{v}$ of 
\eqref{eq_LSv3h} in $(H^\eta)^d$. But if $\mathbf{v}$   is a solution of \eqref{eq_LSv3h}, then $\mathbf{v}$  is necessarily in $(\mathcal{T}_h)^d$.

We now obtain \eqref{eq:error}. First of all observe that from \eqref{eq_LSvp} and \eqref{eq_LSv3h} we obtain 
\begin{eqnarray*} \notag
&& [I-\mathcal{K} (S_h Q \;  \cdot)] ({\bf v}- {\bf  v_h})=  {\bf v} - \mathcal{K} (S_h Q \;  {\bf v}) - {\bf f_h} + S_h \mathcal{K} ( Q \;  {\bf v}) \\ && - S_h\mathcal{K} ( Q \;  {\bf v})  = {\bf v}-S_h{\bf v} +  S_h \mathcal{K} ( Q \;  {\bf v}) - \mathcal{K} (S_h Q \;  {\bf v}).
\label{eq_er1}
\end{eqnarray*}
As $[I-\mathcal{K} (S_h Q \;  \cdot)] $ is invertible in $\mathcal{L}((H^\eta)^d,(H^\eta)^d)$, for sufficiently small $h$, it is enough to estimate the second hand term in this last expression. For the first two terms, by Lemma \ref{lema_vainikkko}, we have, 
\begin{equation}\label{primer_sumando}  
\| {\bf v}-S_h{\bf v} \|_\eta \leq c_{\eta , \mu,R}h^{\nu-\eta} \| {\bf v} \|_\nu, \quad  \eta \leq \nu.  
\end{equation}
Now we estimate $S_h \mathcal{K} -  \mathcal{K}S_h $ as a linear operator in $(H^\eta)^d$. Note that the projector operator $P_h$ commutes with $\mathcal{K}$. To simplify we prove it only for the case $d=2$. For ${\bf w}= \sum_{j\in \mathbb{Z}^2}(w^1(j),w^2(j))^T \varphi_j$, we have
\begin{eqnarray*}
\mathcal{K} P_h {\bf w} &=& \mathcal{K} \sum_{j\in \mathbb{Z}_h^d} \left(  \begin{array}{c} \widehat w^1(j)   \\ \widehat w^2(j)  \end{array} \right)\varphi_j  \\ &=& \sum_{j\in \mathbb{Z}_h^d} \left( \begin{array}{cc}
\widehat K_{11}(j) & \widehat K_{12}(j) \\
\widehat K_{21} (j) & \widehat K_{22} (j)
\end{array} \right)\left(  \begin{array}{c} \widehat w^1(j)   \\ \widehat w^2(j)  \end{array} \right)\varphi_j \\
&=& P_h \sum_{j\in \mathbb{Z}^d} \left( \begin{array}{cc}
\widehat K_{11}(j) & \widehat K_{12}(j) \\
\widehat K_{21} (j) & \widehat K_{22} (j)
\end{array} \right)\left(  \begin{array}{c} \widehat w^1(j)   \\ \widehat w^2(j)  \end{array} \right)\varphi_j =  P_h \mathcal{K} {\bf w} .
\end{eqnarray*}
In particular, $(S_h \mathcal{K}-\mathcal{K}   S_h)=0$ on $(\mathcal{T}_h)^d$. Therefore, 
\begin{eqnarray*}
&& \| (S_h \mathcal{K}-\mathcal{K}   S_h) ( Q \;  {\bf v})\|_\eta \leq   \| (S_h \mathcal{K}-\mathcal{K} S_h) ( P_h(Q \;  {\bf v}))\|_\eta  \\
&& +  \| (S_h \mathcal{K}-\mathcal{K} S_h) ( (I-P_h)(Q \;  {\bf v}))\|_\eta \\ && =
\| (S_h \mathcal{K}-\mathcal{K} S_h) ( (I-P_h)(Q \;  {\bf v}))\|_\eta\leq \| S_h \mathcal{K} ( (I-P_h)(Q \;  {\bf v}))\|_\eta \\ && + \|  \mathcal{K} S_h ( (I-P_h)(Q \;  {\bf v}))\|_\eta . 
\end{eqnarray*}
The two terms here are estimated in a similar way. For the first one,  let $\varepsilon $ be   such that $0 <\varepsilon <2$,
\begin{eqnarray*}
&& \| S_h \mathcal{K} ( (I-P_h)(Q \;  {\bf v}))\|_\eta \leq \| \mathcal{K} ( (I-P_h)(Q \;  {\bf v}))\|_\eta \\ &&  \leq c_R \|  (I-P_h)(Q \;  {\bf v})\|_{\eta-(2-\varepsilon)} 
 \leq c_{\eta, \nu,R} h^{2- \varepsilon+\nu-\eta} \|  Q \;  {\bf v}\|_\nu \\ &&   \leq c_{\eta, \nu , R,Q} h^{2-\varepsilon+\nu-\eta}   \| {\bf v}\|_\nu .
\end{eqnarray*}
From the  above inequality  and (\ref{primer_sumando}) we obtain (\ref{eq:error}).
\end{proof}

The trigonometric collocation provides a numerical method to approximate the solution of \eqref{eq_LSvp}, which coincides with the solution of \eqref{eq_LS} in $x\in B(0,\rho)$. Outside this ball we can approximate the solution of  \eqref{eq_LS} by a suitable discretization of formula \eqref{eq_sol_out}. For example, with the trapezoidal rule we obtain,  
\begin{equation} \label{eq_uout}
\mathbf{v}(x)=\mathbf{f}(x)+h^d\sum_{j \in \mathbb{Z}^d_h} \Phi (x-jh) Q(jh)\mathbf{v_h}(jh) \;  ,\qquad |x|>B(0,\rho). 
\end{equation}
The error can be easily estimated from Theorem \ref{th_conv}, 
$$
| \mathbf{v_h}(x) - \mathbf{v}(x) | \leq  c(x) h^{\nu} \| \mathbf{v} \|_\nu , \quad \nu>2,
$$
where $c(x)=c|x|^{-(d-1)/2}$, $c$ independent of $x$ and $h$ and $|x|^{-(d-1)/2}$
 comes from the fundamental tensor of Lamé operator (see Section \ref{coeficientes}) and the asymptotic behaviour of $H^{(1)}_\nu (r)$ when $r\rightarrow \infty$.
 
Note that $c(x)$ does not blow up as $|x|\to \rho$, even if $\Phi (x-jh)$ becomes singular for some values of $j$. This is due to the fact that $B(0,\rho)$ has compact support included in $Q$ and therefore the terms $\Phi (x-jh) Q(jh)$ in (\ref{eq_uout}) vanish for those $j$. This is in contrast with the method in \cite{V}, where the error bound is lost for   $|x|\to \rho$.

\section{Numerical approximation of scattering data}

As we said in the introduction we usually consider incident waves as plane waves either transverse (plane \textit{s-waves}) or longitudinal (plane \textit{p-waves}).
For an incident p-wave $\mathbf{u}_i^p$ in the form (\ref{uip}) we can define two different scattering amplitudes given in (\ref{pp_asymptotic})-(\ref{ps_asymptotic}). These can be written as (see \cite{BFPRV2} or \cite{BFPRV1})
\begin{align}
\mathbf{v}_{p,\infty}^p\left(\omega, \theta ,  x/|x|\right)   &  =\frac{1}{2\mu+\lambda
}\,\Pi_{x/|x|}\widehat{Q\mathbf{u}_p}\left(  k_{p}x/|x|\right)
,\label{tamplitud3}\\
\mathbf{v}_{p,\infty}^s\left(\omega, \theta ,  x/|x|\right)   &  =\frac{1}{\mu
}\,(\mathrm{I}-\Pi_{x/|x|})\widehat{Q\mathbf{u}_p}\left(  k_{s}x/|x|\right)  ,
\label{tamplitud3b}%
\end{align}
where $\mathbf{u}_p$ is the solution of (\ref{eq_LS}) with $\mathbf{u}_i=\mathbf{u}_i^p$. 
Here  $\Pi_{x/|x|}  $ is the orthogonal projection onto the line defined by the vector $x$. 

Analogously, for an incident s-wave in the form (\ref{uis}) we can define two different scattering amplitudes given in (\ref{ss_asymptotic}). These can be written as
\begin{align}
\mathbf{v}_{s,\infty}^p\left(\omega, \theta,\varphi, x/|x|\right)   &  =\frac{1}{2\mu+\lambda
}\,\Pi_{x/|x|}\widehat{Q\mathbf{u}_s}\left(  k_{p}x/|x|\right)
,\label{tamplitud31}\\
\mathbf{v}_{s,\infty}^s\left(\omega, \theta,\varphi, x/|x|\right)   &  =\frac{1}{\mu
}\,(\mathrm{I}-\Pi_{x/|x|})\widehat{Q\mathbf{u}_s}\left(  k_{s}x/|x|\right)  ,
\label{tamplitud31b}%
\end{align}
where now $\mathbf{u}_s$ is the solution of (\ref{eq_LS}) with $\mathbf{u}_i=\mathbf{u}_i^s$.

Natural approximations of these scattering amplitudes are given by,
\begin{eqnarray*} 
\mathbf{v}_{p,\infty, h}^p( \omega,\theta,\theta' ) &=&  \frac{1}{2\mu+\lambda
}\, h^d \sum_{j\in \mathbb{Z}_h^d} e^{-i k_p\theta' \cdot j h}\left(Q\mathbf{u}_{p,h}( jh) \cdot \theta'\right) \theta',  \\
\mathbf{v}_{p,\infty,h}^s( \omega,\theta, \theta')&=& \frac{1}{\mu
}\, h^d \sum_{j\in \mathbb{Z}_h^d} e^{-i k_s\theta' \cdot j h}\left[  
Q\mathbf{u}_{p,h}( jh) - \left(Q\mathbf{u}_{p,h}( jh) \cdot \theta'\right) \theta' \right] , \\
\mathbf{v}_{s,\infty, h}^p( \omega,\theta,\varphi,\theta' ) &=&  \frac{1}{2\mu+\lambda
}\, h^d \sum_{j\in \mathbb{Z}_h^d} e^{-i k_p\theta' \cdot j h}\left(Q\mathbf{u}_{s,h}( jh) \cdot \theta'\right) \theta',  \\
\mathbf{v}_{s,\infty,h}^s( \omega,\theta,\varphi, \theta')&=& \frac{1}{\mu
}\, h^d \sum_{j\in \mathbb{Z}_h^d} e^{-i k_s\theta' \cdot j h}\left[  
Q\mathbf{u}_{s,h}( jh) - \left(Q\mathbf{u}_{s,h}( jh) \cdot \theta'\right) \theta' \right] ,
\end{eqnarray*}
where $\omega>0$ and $\theta,\theta'\in \mathbb{S}^{d-1}$, $\varphi\in \mathbb{S}^{d-2}$ and 
$$ 
Q\mathbf{u}_{r,h}( jh) =(S_h (Q\mathbf{u}_r))( jh)=Q(jh)\mathbf{u}_r(jh) , \hspace{0.3cm} \mathbf{r}=\mathbf{p} \textrm{ or } \mathbf{s}.    
$$

\section{Fourier coefficients of the Green function} \label{coeficientes}

In this section we prove Lemma \ref{le_coef}. We divide this section in two subsections where we consider separately the cases $d=2$ and $d=3$.

\subsection{The case $d=2$}
The fundamental solution when $d=2$ is given by (see \cite{K}),
\begin{equation*} \label{eq_fund_sol}
\Phi(x)=\Phi_1(|x|) I+\Phi_2(|x|) J(x), 
\end{equation*}
where for $x\in \mathbb{R}^2\backslash \{0\}$ (identified with a matrix $1 \times 2$) the matrix $J$ is given by 
\begin{equation*}
J(x)=\frac{x^Tx}{|x|^2},
\label{eq:Jw}
\end{equation*}
and for each $v>0$, the functions $\Phi_1$ and $\Phi_2$ are given by
\begin{eqnarray*} 
\Phi_1(v)&=&\frac{i}{4\mu} H^{(1)}_0 (k_s v)-\frac{i}{4\omega^2v}\left[ k_s H^{(1)}_1 (k_s v)- k_p H^{(1)}_1 (k_p v) \right],\\
\Phi_2(v)&=&\frac{i}{4\omega^2}\left[ \frac{2k_s}{v} H^{(1)}_1 (k_s v)- k_s^2H^{(1)}_0 (k_s v) \right. \\
&& \quad \left. - \frac{2k_p}{v} H^{(1)}_1 (k_p v)+ k_p^2H^{(1)}_0 (k_p v) \right],
\end{eqnarray*}
with $H^{(1)}_k$ the Hankel function of first kind and order $k$. For $v\to 0$, $\Phi_1 \sim - (k_s^2+k_p^2)/(4\pi\omega^2) \log v$, while $\Phi_2 \sim (k_p^4-k_s^4)/(16\pi\omega^2)v^2\log(v)$, so that the integral in (\ref{eq_LS}) is weakly singular.
Thus, we can write 
\begin{equation} \label{eq_c3}
\Phi_n(v)=\frac{1}{\pi} \log v \Psi_n(v)+ \Upsilon_n (v), \hspace{0.4cm} v >0,  \; \; n=1,2, 
\end{equation}
where $\Psi_n,   \Upsilon_n \in  C^\infty [0,\infty )$, $n=1,2$, are even functions. 

On the other hand, we remind that 
\begin{equation}\label{eq_c3_1}
K_{\alpha \beta}(x)= \left( \Phi_1(|x|)\delta_{\alpha \beta} + \Phi_2(|x|)\frac{x_\alpha x_\beta}{|x|^2} \right) \psi (|x|), \quad \alpha,\beta=1,2.
\end{equation}
Let $a>0 $ be a number to be determined later, $j \in \mathbb{Z}^2$ and we assume that $|j|^{-a } \leq \min\{1/2, \rho\} $. From now on $C$ indicates a positive and universal constant which depends only on   $\omega$,  $\Phi_n$ $n=1,2$, $\lambda$, $\mu$, $\rho$, $R$, $d$ and $a$.

We can write
\begin{equation}\label{descomposicion}
\widehat{K}_{\alpha \beta}(j)=\int_{B(0,|j|^{-a})} K_{\alpha \beta}(x)\varphi_{-j}(x)dx+\int_{|j|^{-a} \leq |x| \leq R} K_{\alpha \beta}(x)\varphi_{-j}(x)dx.
\end{equation}

We start by estimating the first integral in (\ref{descomposicion}), essentially by using the size of the ball $B(0,|j|^{-a})$. From (\ref{eq_c3}) and (\ref{eq_c3_1})
\begin{equation*}\label{primera}
 \left| \int_{B(0,|j|^{-a})} K_{\alpha \beta}(x)\varphi_{-j}(x)dx \right|  \leq  C \int_{B(0, |j|^{-a})} | \log |x|| dx \leq C\frac{  \log |j|}{|j|^{2 a}}. 
 \end{equation*}
Note that this first term satisfies the bound in (\ref{eq_le_coef}) as soon as $a\geq 1$. 

To deal with the second integral in (\ref{descomposicion}) we use that $\varphi_j=-\frac{R^2}{\pi^2|j|^2}\Delta \varphi_j$ 
 and Green's formula,
 \begin{eqnarray*} \nonumber
&& \int_{|j|^{-a} \leq |x| \leq R} K_{\alpha \beta}(x)\varphi_{-j}(x)dx\\ \nonumber
&&=-
\frac{R^2}{\pi^2|j|^2}\int_{|j|^{-a} \leq |x| \leq R} K_{\alpha \beta}(x)\Delta \varphi_{-j}(x)dx \\ \nonumber
&& =-\frac{R^2}{\pi^2|j|^2}\int_{|j|^{-a} \leq |x| \leq R} \Delta K_{\alpha \beta}(x) \varphi_{-j}(x)dx   \\ \nonumber
&& \quad + \frac{R^2}{\pi^2|j|^2}\int_{|x|=R} \left(   K_{\alpha \beta}(x)\frac{ \partial \varphi_{-j}}{\partial r}(x) -\varphi_j(x)\frac{\partial  K_{\alpha \beta}}{\partial r}(x)    \right) d \sigma (x)  \\ 
&& \quad  - \frac{R^2}{\pi^2|j|^2}\int_{|x|=|j|^{-a}} \left(   K_{\alpha \beta}(x)\frac{ \partial \varphi_{-j}}{\partial r}(x) -\varphi_j(x)\frac{\partial  K_{\alpha \beta}}{\partial r}(x)    \right) d \sigma (x). 
\end{eqnarray*}
Note that we only have to prove that the three integrals in the right hand side of the second equality above can be bounded by $C \log |j|$. The  second one is obviously bounded since $K_{ \alpha \beta}$ is $C^\infty$  in $0  < |x |  \leq R$. The same argument applies to the first integral in the annulus $\rho  \leq |x |  \leq R$, so that we can reduce ourselves to the subregion $|j|^{-a}<|x|<\rho$. Thus, we only have to show that
\begin{equation}\label{integral_1_2}
\left|  \int_{|j|^{-a} \leq |x| \leq \rho} \Delta \Phi_{\alpha \beta}(x) \varphi_{-j}(x)dx    \right| \leq C \log |j|,
\end{equation}
and 
\begin{equation}\label{integral_2_2}
\left|  \int_{|x|=|j|^{-a}} \left(   \Phi_{\alpha \beta}(x)\frac{ \partial \varphi_{-j}}{\partial r}(x) -\varphi_j(x)\frac{\partial  \Phi_{\alpha \beta}}{\partial r}(x)    \right) d \sigma (x)   \right| \leq C \log |j|,
\end{equation}
where we have used that $K_{\alpha\beta}$ is equal to $\Phi_{\alpha\beta}$ in $B(0,\rho)$.

A straightforward computation shows that
\begin{eqnarray} \nonumber
\Delta \Phi_{\alpha \beta}(x)&=& \Delta \Phi_1(|x|) \delta_{\alpha \beta}+  \Delta \Phi_2 (|x|) \frac{x_\alpha x_\beta}{|x|^2}+(-1)^{\frac{\alpha +\beta}{2}}\Phi_2(|x|)2\frac{x_1^2-x_2^2}{|x|^4}\delta_{\alpha \beta}\\ \label{componentes_2_d}
&& -2\Phi_2(|x|) \frac{x_1x_2}{|x|^4}(1-\delta_{\alpha \beta}),
\end{eqnarray}
and
\begin{equation}\label{derivada_radial_2_d}
\frac{\partial \Phi_{\alpha \beta}}{\partial r}(x)=  \left(\nabla \Phi_1(|x|) \cdot \frac{x}{|x|}\right)\delta_{\alpha \beta}+\left(\nabla \Phi_2(|x|) \cdot \frac{x}{|x|}\right) \frac{x_\alpha x_\beta}{|x|^2}
   ,
   \end{equation}
   where for $n=1,2$ we have
\begin{equation} \label{eq_c1}
	\nabla \Phi_n(|x|)= \left( \frac{1}{\pi |x|} \Psi_n(|x|)+ \frac{\log |x|}{\pi}\Psi'_n(|x|)+\Upsilon'(|x|)   \right) \frac{x}{|x|}, 
	\end{equation}
 \begin{equation} \label{eq_c2}
	\Delta \Phi_n(|x|)=\frac{2+\log |x|}{\pi |x|}\Psi'_n(|x|)+\frac{\log |x| }{\pi} \Psi''_2(|x|)+\frac{1}{|x|}\Upsilon'_n(|x|)+\Upsilon''_n(|x|).
	\end{equation}
   
From (\ref{componentes_2_d}) and (\ref{eq_c2}) we have that 
$$\left| \Delta \Phi_{\alpha \beta}(x)  \right| \leq C \left(  \frac{| \log |x||}{|x|}+\frac{1}{|x|^2}   \right) \leq \frac{C}{|x|^2},  $$ 
and
$$\left|  \int_{|j|^{-a} \leq |x| \leq \rho} \Delta \Phi_{\alpha \beta}(x) \varphi_{-j}(x)dx    \right| \leq C \int_{|j|^{-a}}^\rho \frac{dt}{t} \leq C \log |j|,$$ 
so (\ref{integral_1_2}) holds.

Finally, to prove estimate (\ref{integral_2_2}) we observe that $\frac{\partial \varphi_{-j}}{\partial r}(x)=-\frac{i \pi}{R} (j \cdot x) \varphi_{-j}(x)$, and the following  
$$
\left| \Phi_n(|x|) \right| \leq C (1+ \log |j|), \hspace{0.2cm} \left|       \nabla \Phi_n(|x|)   \right| \leq  C ( 1+  \log |j| + |j|^{a}), \hspace{0.2cm} |x|=|j|^{-a},
$$
for $n=1,2$, 
which are easily obtained from (\ref{eq_c1}) and (\ref{eq_c3}).
Therefore, 
\begin{eqnarray*}
&& \left|  \int_{|x|=|j|^{-a}} \left(   \Phi_{\alpha \beta}(x)\frac{ \partial \varphi_{-j}}{\partial r}(x) -\varphi_j(x)\frac{\partial  \Phi_{\alpha \beta}}{\partial r}(x)    \right) d \sigma (x)   \right| \\
&& \quad \leq C\left( (1+ \log |j|)|j| + ( 1+  \log |j| + |j|^{a}) \right)|j|^{-a}  \\
&& \quad \leq C  \left( 1 + |j|^{1-a}  \log |j|+ |j|^{-a} \log |j| \right).
\end{eqnarray*}
If we take $a =1$, the above  estimate  gives  (\ref{integral_2_2}).

\subsection{The case $d=3$}

The proof is analogous to the case $d=2$ so that we omit some details. The fundamental solution in dimension $d=3$ is given by (see \cite{AK}) 
 \begin{equation*}\label{Matriz_fundamental_3}
\Phi (x)=
\Phi_1(|x|)I+\Phi_2(|x|)J(x) ,
\end{equation*}
and as in the case $d=2$, $J(x)=\frac{x^Tx}{|x|^2}$,
where for $v>0$
\begin{equation*}
\Phi_1(v) =\frac{k_s^2}{4\pi\omega^2}\frac{e^{ik_s v}}{v}
 + \frac{1}{4\pi \omega^2}\left(   \frac{ik_s e^{ik_s v} - ik_pe^{ik_p v}}{v^2} - \frac{e^{ik_s v}-e^{ik_p v}}{v^3}  \right),
\end{equation*}
and
\begin{eqnarray*}
\Phi_2(v)&=&\frac{1}{4\pi \omega^2}   \left(-  \frac{k_s^2e^{ik_sv}-k_p^2e^{ik_pv}}{v} -  3\frac{ik_se^{ik_sv}-ik_pe^{ik_pv}}{v^2}     
\right. \\ && \quad \left. + 3\frac{e^{ik_sv}-e^{ik_pv}}{v^3}        \right)     . 
\end{eqnarray*}

It can be seen that 
\begin{equation}\label{asintotica_1_3} 
 \Phi_1(v)=\frac{k_s^2}{4\pi\omega^2}\frac{e^{ik_sv}}{v}+ \frac{k_p^2-k_s^2}{8\pi \omega^2v}+\Psi_1(v) , 
\end{equation}
\begin{equation}\label{asintotica_2_3}
\Phi_2(v)=\frac{k_s^2-k_p^2}{8\pi \omega^2v}+\Psi_2(v),
\end{equation}
where $\Psi_1$ and $\Psi_2$ are in $ C^\infty [0,\infty )$ and null when $k_p=k_s$. 
  
  From (\ref{asintotica_1_3}) and (\ref{asintotica_2_3})  we have
\begin{eqnarray*}
\Phi_{\alpha \beta}(x)&=&\left(\frac{k_s^2 e^{ik_s|x|}}{4 \pi \omega^2|x|}+ \frac{k_p^2-k_s^2}{8\pi \omega^2|x|}+\Psi_1(|x|) \right) \delta_{\alpha \beta}
 \\
&& +\left( \frac{k_s^2-k_p^2}{8\pi \omega^2|x|}+\Psi_2(|x|)     \right)\frac{x_\alpha x_\beta}{|x|^2}.
\end{eqnarray*}



Again C will indicate a universal constant, depending on the same parameters as in the case $d=2$.  
Let $ j\in \mathbb{Z}^3$, $a>0$  to be chosen later and we assume  that $ |j|^{-a} \leq \min \{ 1/2, \rho \}$. 

\begin{equation}
\widehat{K_{\alpha \beta}}(j)  =\int_{B(0,|j|^{-a})}K_{\alpha \beta}(x)\varphi_{-j}(x)dx \label{descomposicion_3}
 +\int_{|j|^{-a} \leq |x| \leq R}K_{\alpha \beta}(x)\varphi_{-j}(x)dx.
\end{equation}

For the first integral we observe that 
$$K_{\alpha \beta}(x) \leq C \left( 1+ \frac{1}{|x|} \right), \hspace{0.4cm} x \in B(0,R).$$
Therefore, 
\begin{equation}\label{primera_3}
\left|  \int_{|x| \leq |j|^{-a}}K_{\alpha \beta}(x)\varphi_{-j}(x)dx   \right| 
 \leq C \int_{0}^{|j|^{-a}} \left( t +t^2 \right)dt \leq \frac{C}{|j|^{2 a}}.
 \end{equation}
 
 For the second integral in (\ref{descomposicion_3}) we proceed as in the case $d=2$
\begin{eqnarray} \nonumber 
&& \int_{|j|^{-a} \leq |x| \leq R}K_{\alpha \beta}(x)\varphi_{-j}(x)dx\\ \nonumber
&&=-\frac{R^2}{\pi^2|j|^2}\int_{|j|^{-a}  \leq  |x| \leq R}K_{\alpha \beta}(x ) \Delta \varphi_{-j}(x)dx 
\\ \nonumber
&&=-\frac{R^2}{\pi^2|j|^2}\int_{|j|^{-a}  \leq  |x| \leq R} \Delta K_{\alpha \beta}(x )  \varphi_{-j}(x)dx \\ \nonumber
&& \quad +\int_{|x|=R} \left( K_{\alpha \beta}(x) \frac{\partial \varphi_{-j}}{\partial r}(x)- \varphi_{-j}(x) \frac{\partial K_{\alpha \beta}}{\partial r}(x)      \right) d \sigma (x)    \\
&& \quad -\int_{|x|=|j|^{-a}} \left( K_{\alpha \beta}(x) \frac{\partial \varphi_{-j}}{\partial r}(x)- \varphi_{-j}(x) \frac{\partial K_{\alpha \beta}}{\partial r}(x)      \right) d \sigma (x) .  \label{anton} 
\end{eqnarray} 
$K_{\alpha \beta}$ is a $C^\infty$ function  in  $\min\{ 1/2, \rho \} \leq |x| \leq R$, so $\Delta K_{\alpha \beta}$ in bounded in this region. Since $\psi(|x|)=1 $ in $B(0,\rho)$, it will be sufficient to show that
\begin{equation}\label{intefral_13}
\left|   \int_{|j|^{-a}  \leq  |x|  \leq \frac{1}{2} }\Delta \Phi_{\alpha \beta}(x )  \varphi_{-j}(x)dx    \right| \leq C \log |j|,
\end{equation}
and
\begin{equation}\label{integral_23}
\left|    \int_{|x|=|j|^{-a}} \left( \Phi_{\alpha \beta}(x) \frac{\partial \varphi_{-j}}{\partial r}(x)- \varphi_{-j}(x) \frac{\partial \Phi_{\alpha \beta}}{\partial r}(x)      \right) d \sigma (x)  \right| \leq C \log |j|.
\end{equation}

A simple calculation gives us for $|x| \in [|j|^{-a},\frac{1}{2}]$ and  $n=1,2$  that
$$
 |\Phi_n(|x|)| \leq \frac{C}{|x|},\quad   |\nabla \Phi_n(|x|)|\leq \frac{C}{|x|^2}, \quad    |\frac{\partial \Phi_n}{\partial r}(|x|)|, \leq \frac{C}{|x|^2},\quad   |\Delta \Phi_n(|x|)|\leq \frac{C}{|x|}.
$$
Moreover, if $g_{\alpha \beta}(x)=\frac{x_\alpha x_\beta}{|x|^2}$ then $|\nabla g_{\alpha   \beta}(x)| \leq \frac{C}{|x|}$ and   $|\Delta g_{\alpha \beta}(x)| \leq \frac{C}{|x|^2}$. 

From these estimates we obtain
\begin{equation*}\label{estimaciones_3_d}
\left| \Phi_{\alpha \beta } \right| \leq \frac{C}{|x|}, \hspace{0.15cm} \left| \nabla \Phi_{\alpha \beta}(x)   \right|\leq \frac{C}{|x|^2}, \hspace{0.15cm} \left| \Delta \Phi_{\alpha \beta}(x)   \right|\leq \frac{C}{|x|^3} , \hspace{0.15cm}  \left|   \frac{\partial \Phi_{\alpha \beta}}{\partial r}(x)  \right|\leq \frac{C}{|x|^2}.
\end{equation*}
We start by estimating (\ref{intefral_13}) by using  the above estimates

\begin{equation*}\label{integra_131}
\left|   \int_{|j|^{-a}  \leq  |x|  \leq \frac{1}{2}} \Delta \Phi_{\alpha \beta}(x )  \varphi_{-j}(x)dx    \right|    \leq C \int_{|j|^{-a}}^{1/2}  \frac{1}{t}d t  \leq C \log |j|.
\end{equation*}

To study (\ref{integral_23}) we use the same estimates we used to obtain (\ref{intefral_13}) and 
$\frac{\partial \varphi_{-j}}{\partial r}(x)=-i \frac{\pi}{R}\frac{j \cdot x}{|x|} \varphi_{-j}(x).$
We have
\begin{equation*}\label{integral_231}
\left|    \int_{|x|=|j|^{-a}}  \Phi_{\alpha \beta}(x) \frac{\partial \varphi_{-j}}{\partial r}(x) d \sigma (x) \right|   \leq C |j|^{1+a}\int_{|x|=|j|^{-a}}d\sigma (x) \leq C |j|^{1-a},
\end{equation*}
and
\begin{equation*}\label{integral_232}
\left|    \int_{|x|=|j|^{-a}} \varphi_{-j}(x) \frac{\partial \Phi_{\alpha \beta}}{\partial r}(x)     d \sigma (x)  \right| \leq C |j|^{2 a}   \int_{|x|=|j|^{-a}} d \sigma (x)\leq C.
\end{equation*}
If we take $a =1$, from  the inequalities above, (\ref{descomposicion_3}), (\ref{primera_3}), (\ref{anton}), (\ref{intefral_13}) and (\ref{integral_23}) we obtain estimate (\ref{eq_le_coef}).

\section{Implementation and numerical experiments}

The discretization of the Lippmann-Schwinger equation (\ref{eq_LSv}) reduces to the finite dimensional implicit system (\ref{eq_LSv3m}). The solution requires,
\begin{enumerate}
\item An approximation of the Fourier coefficients $\hat K_h$.
\item A solver for the linear system. Here we use the gmres routine in MATLAB. 
\end{enumerate} 
To approximate the Fourier coefficients $\hat K_{\alpha \beta}$ we consider  the trapezoidal rule. This can be interpreted as  the discrete Fourier transform of  $K_{\alpha \beta}$ in a suitable uniform mesh centered in the origin. Therefore we use the fft algorithm in MATLAB. 

We must be careful since the Green function is singular at the origin. This affects to the numerical approximation of the Fourier coefficients in (\ref{eq_foco}), where the Green tensor appears in the integrand. In particular, if we use the trapezoidal rule to approximate the integral, we obtain a singular value for this integral. To avoid this problem we simply replace the value at the origin by zero. This does not affect significantly to the accuracy of the trapezoidal rule. For example, in dimension $d=2$ the functions in the Green tensor have a logarithmic singularity at the origin.  The error derived by our choice affects the quadrature formula only near $ x=0$. More precisely, it affects the ball centered at $ x=0$ and radius $h$ (the mesh size) $B(0,h)$. The real value of the integral in this ball is easily estimated by
$$
\int_{B(0,h)} \log |x| dx =2\pi \int_0^h r \log(r) \; dr \sim h^2 \log h.
$$ 
Thus, if we replace the integral by the value obtained with the trapezoidal rule and our choice, the error is basically the same as the one associated to the trapezoidal rule for smooth functions.

Now we show some numerical experiments that illustrate the convergence of the numerical method in dimension $d=2$ and provide numerical approximations of scattering data.

\bigskip

{\bf Experiment 1} In this first experiment we illustrate the convergence of the solution of the Lippmann-Schwinger equation by considering an analytical solution. We define the vector function 
$$
{\bf g} (x)=\chi_{|x|<1}(1-|x|^2)^4(1, 1).  
$$
Then, ${\bf v}=(\Delta^*+\omega^2I){\bf g}$, ${\bf f}={\bf v}-{\bf g}$ and $Q=I$ solve the Lippmann-Schwinger equation (\ref{eq_LSv}). In table \ref{tab1} we illustrate the convergence of the solution as $N$ grows. 

\begin{table}
\begin{center}
\begin{tabular}{|c|c|c|}
h & $\| {\bf v}_h-{\bf v} \|_{L^\infty}$ & $\| {\bf v}_h-{\bf v} \|_{L^2}$\\ \hline
$2^{-4}$ & $7.1521 \times 10^{-2}$ & $5.1043 \times 10^{-2}$\\
$2^{-6}$ & $5.6033 \times 10^{-3}$ & $4.0045 \times 10^{-3}$\\
$2^{-8}$ & $4.2506 \times 10^{-4}$ & $3.0240 \times 10^{-4}$
\end{tabular}
\caption{Experiment 1: error estimate as $h$ decreases. \label{tab1}}
\end{center}
\end{table}

\bigskip

{\bf Experiment 2}. Here we show an example of the scattering fields and amplitudes that can be obtained with the numerical method presented in this paper. Unfortunately we do not have analytical solutions in this case to compare the approximation. 

We consider the Lamé parameters $\lambda=1$, $\mu=4$ and the potential given by 
$$
Q(x)=q(x)\; I, \quad q(x)=\chi_{0.6<|x|<0.8}+1.2 \chi_{|x_1|+|x_2|<0.2}
$$
which has compact support in $B(0,1)$. 

We fix the angle of an incident transverse wave $\theta=(\cos\pi/4, \sin \pi/4)$. In Figure \ref{fig2} we show the real part of  the transverse wave $Re\; ({\bf v}_s(x)\cdot \theta^\perp)$ (here $\theta^\perp=(\cos 3\pi/4, \sin 3\pi/4)$) for $\omega=50$ both in the computational domain $[-2.1,2.1]\times [-2.1,2.1]$ and the domain where the approximation converges to the solution of the real continuous equation $[-1,1]\times [-1,1]$. In the computational domain the solution is periodic, due to the trigonometric basis that we use. Of course,  this is not the case for the inner subdomain where we approximate the real solution. 

\begin{figure}%
\begin{tabular}{cc}
\includegraphics[width=6cm]{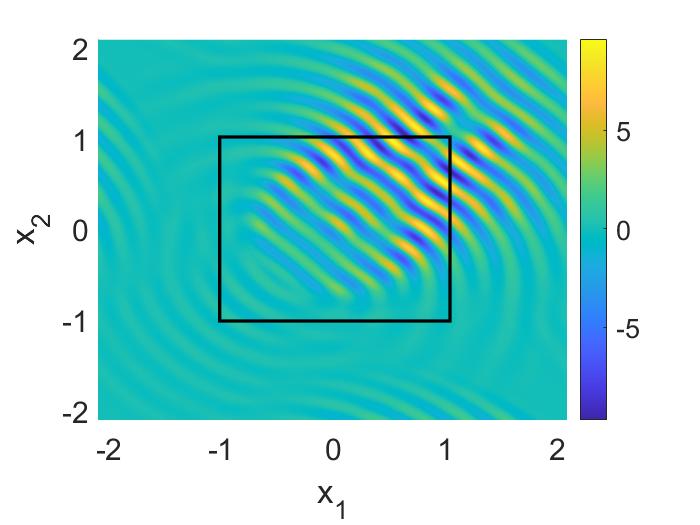}%
& \includegraphics[width=6cm]{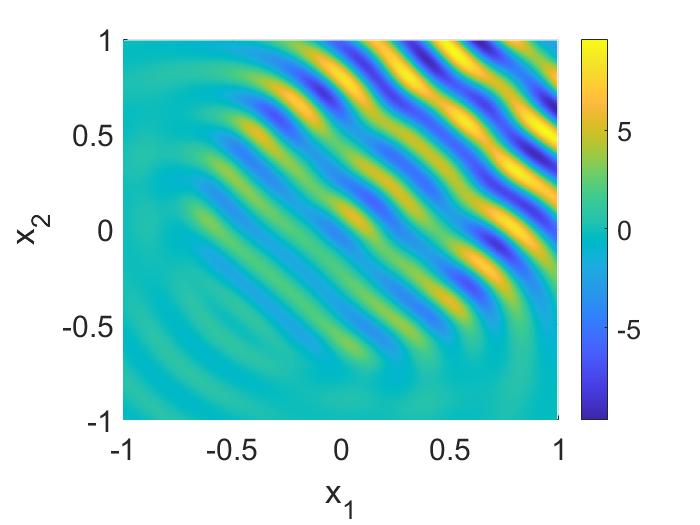}\\
Computational domain & Real approximation domain, 
\end{tabular}
\caption{Numerical results of experiment 2: $Re\; ({\bf v}_s( x)\cdot \theta^\perp)$   corresponding to a transverse incident wave with incident angle $\theta =(\cos\pi/4, \sin \pi/4)$ and $\omega=50$, both in the computational domain (left) and the subdomain where the approximation to the continuous solution holds (right).  }%
\label{fig2}
\end{figure}

Now  we compare both the real parts of the longitudinal $Re\; ({\bf v}_s(x)\cdot \theta)$ and transverse $ Re (\;{\bf v}_s(x)\cdot \theta^\perp)$ solutions for transverse incident waves at different frequencies and the same angle $\theta=(\cos\pi/4, \sin \pi/4)$. Note that here the incident wave $u^s_i= e^{ik_s \theta\cdot x}\theta^\perp$ is complex and so it is the scattered field. The real part (resp. imaginary part) corresponds to the real part (resp. imaginary part) of the incident wave.
In particular, we observe how larger incident frequencies provide more complicated solutions. A similar comparison for the longitudinal incident wave  $u^s_i= e^{ik_s \theta\cdot x}\theta$ is given in Figure \ref{fig4}.

\begin{figure}%
\begin{tabular}{cc}
\includegraphics[width=6cm]{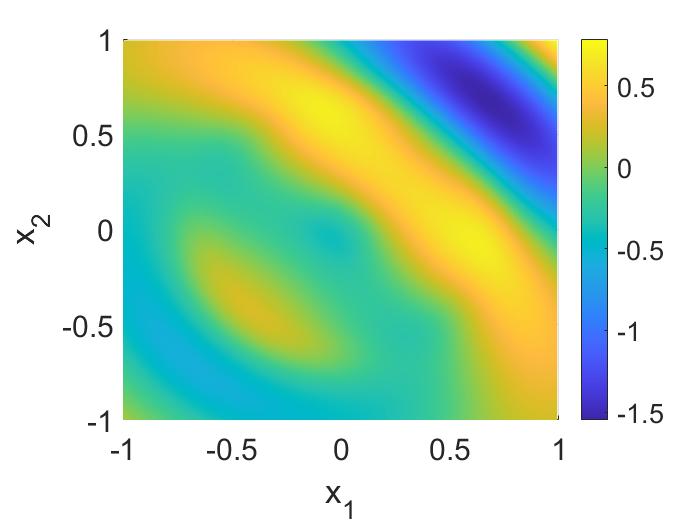}%
& \includegraphics[width=6cm]{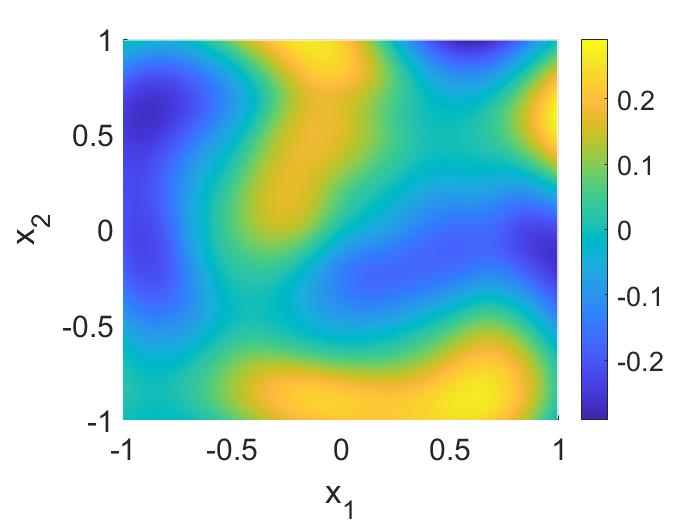}\\
$Re\; ({\bf v}_s\cdot \theta^\perp)$, $\omega=10$ & $Re\; ({\bf v}_s\cdot \theta)$, $\omega=10$, \\
\includegraphics[width=6cm]{u_s_k50_real_trans_real.jpg}%
& \includegraphics[width=6cm]{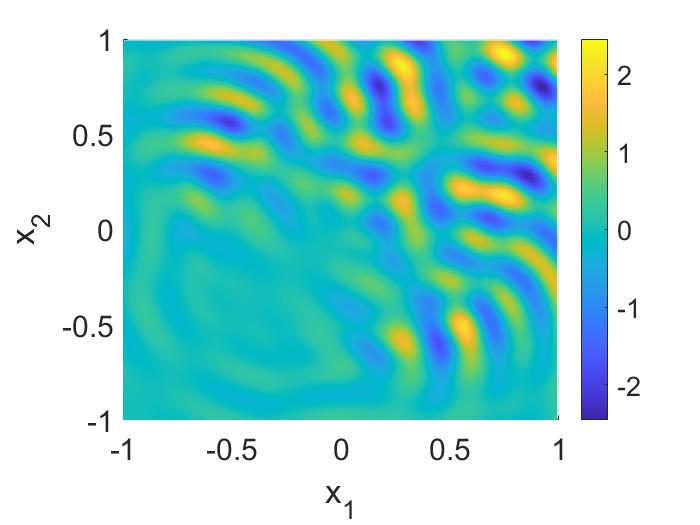}\\
$Re\; ({\bf v}_s\cdot \theta^\perp)$, $\omega=50$ & $Re\; ({\bf v}_s\cdot \theta)$, $\omega=50$, \\
\includegraphics[width=6cm]{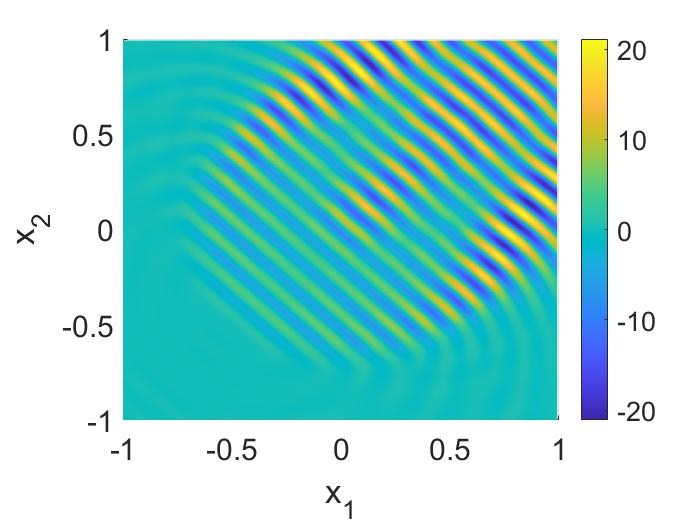}%
& \includegraphics[width=6cm]{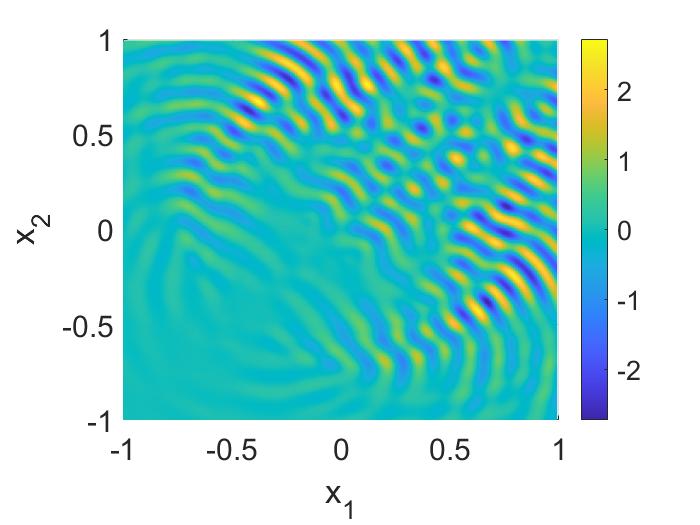}\\
$Re\; ({\bf v}_s\cdot \theta^\perp)$, $\omega=100$ & $Re\; ({\bf v}_s\cdot \theta)$, $\omega=100$ .
\end{tabular}
\caption{Numerical results of experiment 2: We show transverse $Re\; ({\bf v}_s( x)\cdot \theta^\perp)$ (left) and  longitudinal $Re\; ({\bf v}_s(x)\cdot \theta)$ (right) waves corresponding to a transverse incident wave with incident angle $\theta =(\cos \pi/4, \sin \pi/4)$ and different frequencies $\omega=10,50$ and $100$.  }%
\label{fig3}
\end{figure}

\begin{figure}%
\begin{tabular}{cc}
\includegraphics[width=6cm]{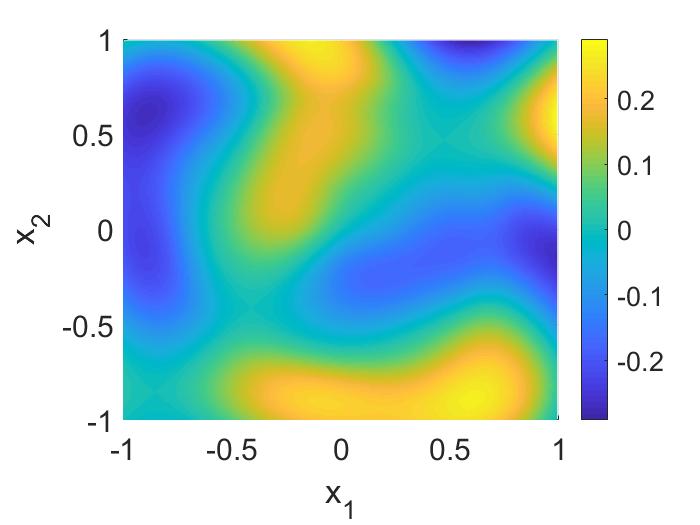}%
& \includegraphics[width=6cm]{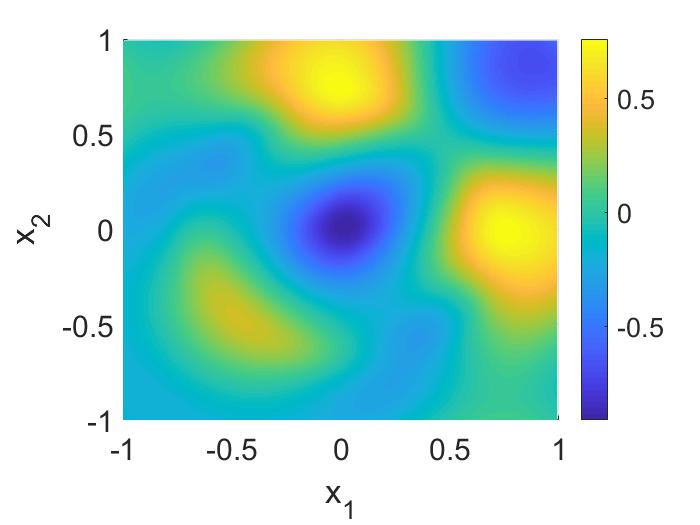}\\
$Re\; ({\bf v}_p\cdot \theta^\perp)$, $\omega=10$ & $Re\; ({\bf v}_p\cdot \theta)$, $\omega=10$, \\
\includegraphics[width=6cm]{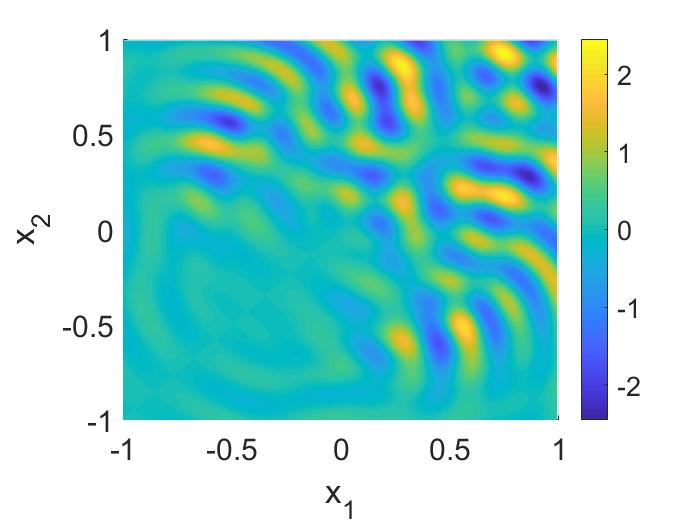}%
& \includegraphics[width=6cm]{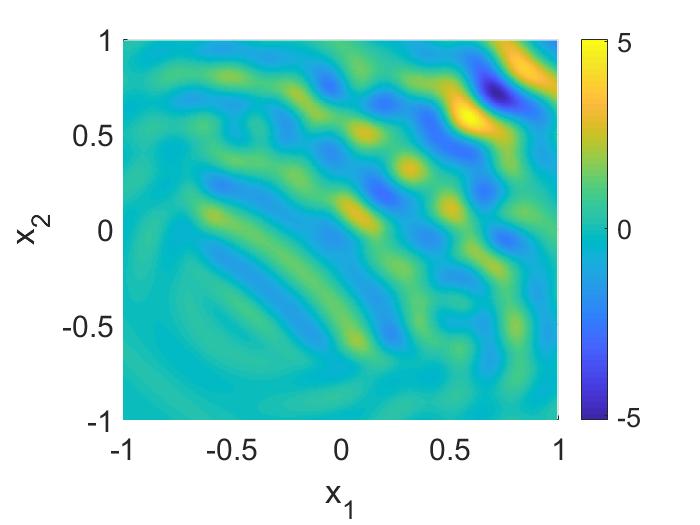}\\
$Re\; ({\bf v}_p\cdot \theta^\perp)$, $\omega=50$ & $Re\; ({\bf v}_p\cdot \theta)$, $\omega=50$, \\
\includegraphics[width=6cm]{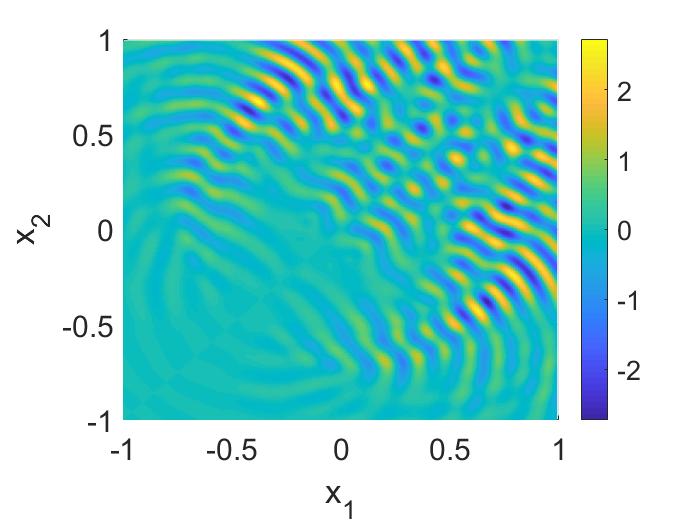}%
& \includegraphics[width=6cm]{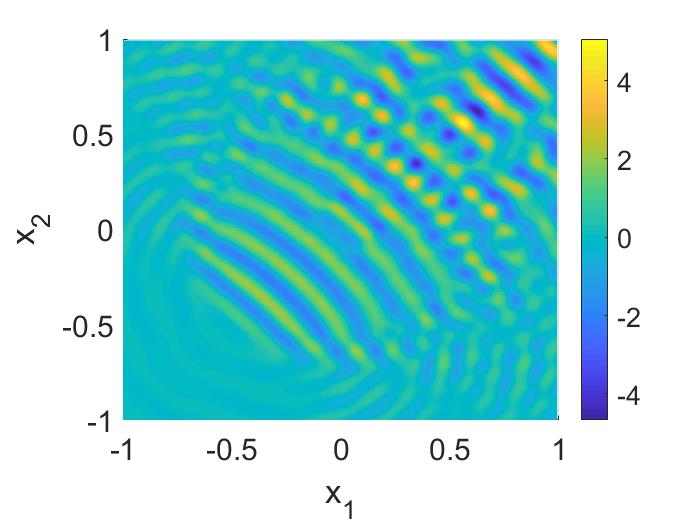}\\
$Re\; ({\bf v}_p\cdot \theta^\perp)$, $\omega=100$ & $Re\; ({\bf v}_p\cdot \theta)$, $\omega=100$ .
\end{tabular}
\caption{Numerical results of experiment 2: We show transverse $Re\; ({\bf v}_s( x)\cdot \theta^\perp)$ (left) and longitudinal $Re\; ({\bf v}_s(x)\cdot \theta)$ (right) waves corresponding to a longitudinal incident wave with incident angle $\theta =(\cos \pi/4, \sin \pi/4)$ and different frequencies $\omega=10,50$ and $100$.  }%
\label{fig4}
\end{figure}



Finally, in Figure \ref{fig1} the sinogram with the scattering amplitudes corresponding to the real part of the transverse waves $Re\; ({\bf v}^s_s(\omega,\theta,\theta'))$ and longitudinal ones $Re \;({\bf v}^s_p(\omega,\theta,\theta'))$, for a traverse incident wave ${\bf u_i^s}=e^{ik_s \theta} \theta^\perp$, with $\theta=(\cos\pi/4, \sin \pi/4)$, different frequencies $\omega>0$ and angles $\theta'\in \mathbb{S}^1$ (upper simulations). In the lower simulations, we show the analogous sinograms corresponding to a longitudinal incident wave ${\bf u_i^p}=e^{ik_p \theta} \theta$, with $\theta=(\cos\pi/4, \sin \pi/4)$.  

\begin{figure}%
\begin{tabular}{cc}
\includegraphics[width=6cm]{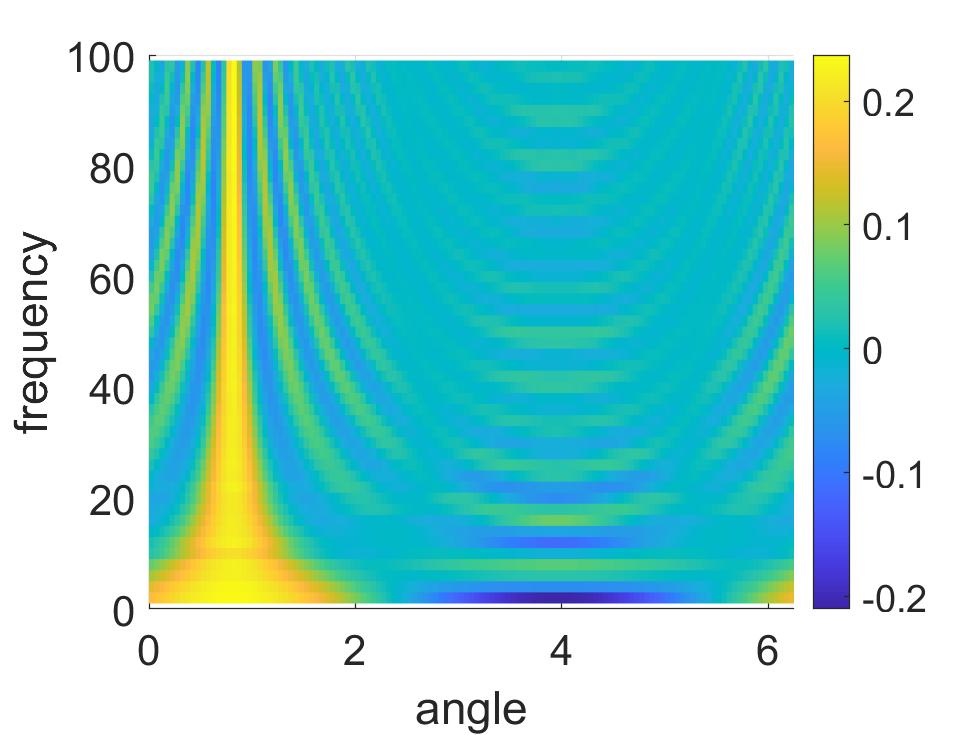}%
& \includegraphics[width=6cm]{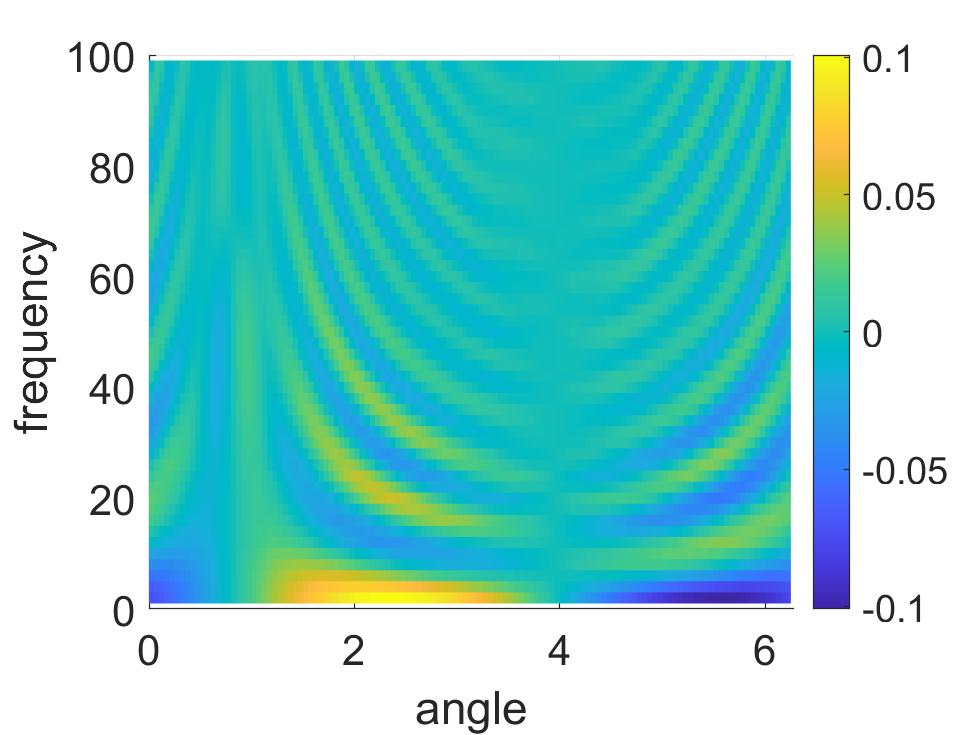}\\
Transverse amplitude & Longitudinal amplitude, \\
\includegraphics[width=6cm]{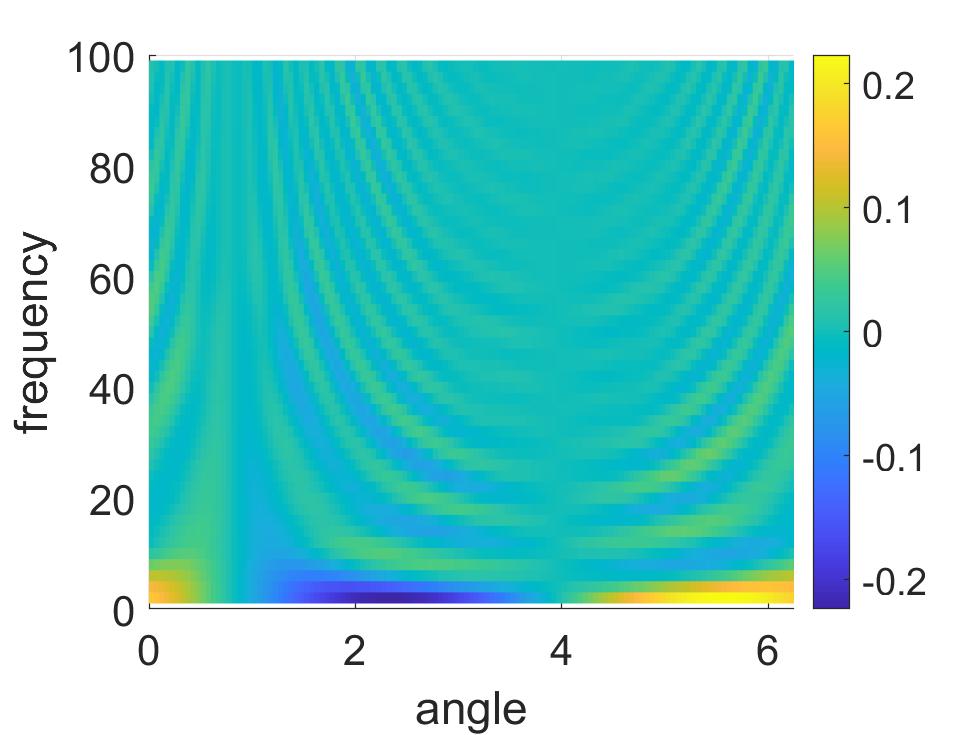}%
& \includegraphics[width=6cm]{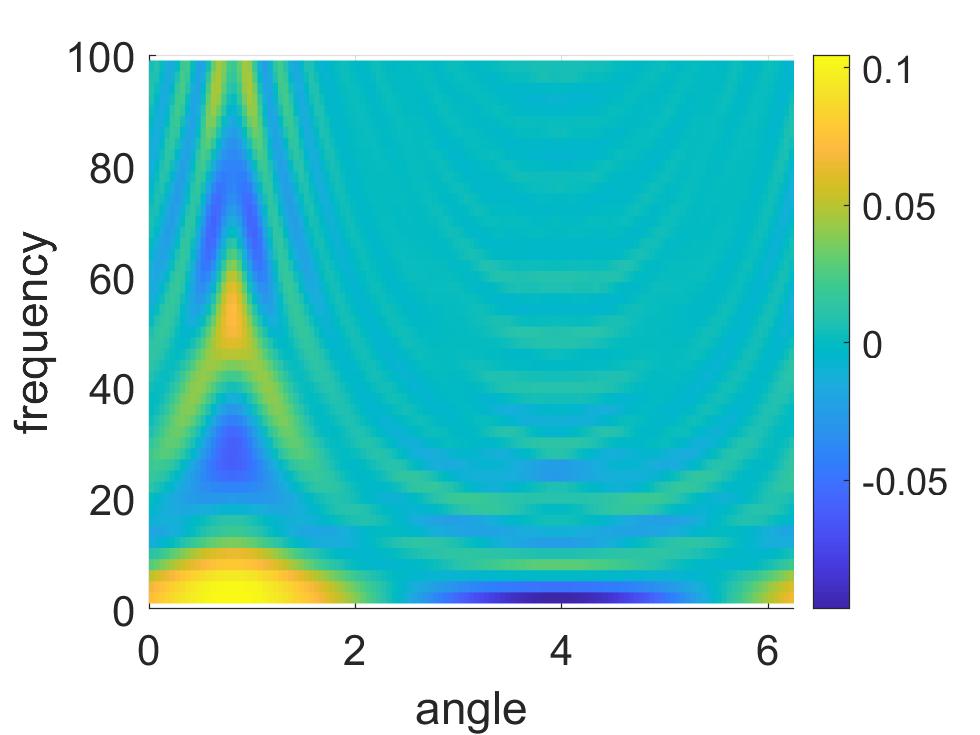}\\
Transverse amplitude & Longitudinal amplitude,
\end{tabular}
\caption{Numerical results of experiment 2: scattering amplitudes corresponding to a transverse incident wave with incident angle $\theta =(\cos\pi/4, \sin \pi/4)$ (upper simulations), and longitudinal incident wave with the same incident angle $\theta =(\cos\pi/4, \sin \pi/4)$ (lower simulations). The horizontal axis corresponds to the angle $\theta_a'\in[0,2\pi)$ such that $\theta'=(\cos \theta_a',\sin \theta_a')\in \mathbb{S}^1$ and the vertical axis is the frequency $\omega\in(0,100)$.}%
\label{fig1}
\end{figure}

When considering a transverse incident wave (upper simulations in figure \ref{fig1}) we observe that transverse amplitudes are more concentrated for $\theta'=\theta=(\cos\pi/4, \sin \pi/4)$ and larger $\omega$ while longitudinal ones have smaller amplitude for $\theta'=\theta=(\cos\pi/4, \sin \pi/4)$ and  $\theta'=\theta^\perp=(\cos 3\pi/4, \sin 3\pi/4)$. On the other hand, for a longitudinal  incident wave (lower simulations in figure \ref{fig1}) we observe the opposite phenomenon, i.e. transverse amplitudes have  smaller amplitude for $\theta'=\theta=(\cos\pi/4, \sin \pi/4)$ and  $\theta'=\theta^\perp=(\cos 3\pi/4, \sin 3\pi/4)$, while longitudinal ones are more concentrated for $\theta'=\theta=(\cos\pi/4, \sin \pi/4)$ and larger $\omega$.

\section*{Acknowledgements} The authors acknowledge the support of Ministerio de Ciencia, Innovaci\'on y Universidades of
the Spanish goverment through grant MTM2017-85934-C3-3-P 

\vspace{1cm}

\end{document}